\newcommand{\comment}[1]{}
\definecolor{teal}{RGB}{0,128,128}
\definecolor{darkpurple}{RGB}{128,0,128}
\definecolor{gray}{RGB}{150,150,150}
\theoremstyle{plain}
\newtheorem{theorem}{Theorem}[section]
\newtheorem{lemma}[theorem]{Lemma}
\newtheorem{corollary}[theorem]{Corollary}
\newtheorem{question}[theorem]{Question}
\theoremstyle{definition}
\newtheorem{definition}[theorem]{Definition}
\newtheorem{example}[theorem]{Example}
\newcommand{\cB}{{\cal B}}
\newcommand{\PD}{\mathrm{PD}}
\newcommand{\DPD}{\mathrm{DPD}}
\newcommand{\PDN}{\mathrm{PDN}}
\newcommand{\DPDN}{\mathrm{DPDN}}
\newcommand{\lc}{\left\lceil}
\newcommand{\rc}{\right\rceil}
\newcommand{\lf}{\left\lfloor}
\newcommand{\rf}{\right\rfloor}
\title{
	Packing designs with large block size}
\author{
	Andrea C.~Burgess\thanks{Department of Mathematics and Statistics, University of New Brunswick, Saint John, NB,  Canada.}	 \thanks{Corresponding author.  Email: \texttt{andrea.burgess@unb.ca}.}
	\and
	Peter Danziger\thanks{Department of Mathematics,
	Toronto Metropolitan University, Toronto, ON, Canada.}
	\and
	Daniel Horsley\thanks{School of Mathematics, Monash University, Clayton, VIC, Australia.}
	\and
	Muhammad Tariq Javed\thanks{Department of Computer Science, Toronto Metropolitan University, Toronto, ON, Canada.}
}
\begin{document}
	\maketitle

\begin{abstract}
Given positive integers $v$, $k$, $t$ and $\lambda$ with $v \geq k \geq t$, a {packing design} $\PD_{\lambda}(v,k,t)$ is a pair $(V,\cB)$, where $V$ is a $v$-set and $\cB$ is a collection of $k$-subsets of $V$ such that each $t$-subset of $V$ appears in at most $\lambda$ elements of $\cB$. When $\lambda=1$, a $\PD_1(v,k,t)$ is equivalent to a binary code with length $v$, minimum distance $2(k-t+1)$ and constant weight $k$.  The maximum size of a $\PD_{\lambda}(v,k,t)$ is called the {packing number}, denoted $\PDN_{\lambda}(v,k,t)$.
In this paper we consider packing designs with $k$ large relative to $v$.  We prove that for a positive integer $n$, $\PDN_{\lambda}(v,k,t) = n$ whenever $nk-(t-1)\binom{n}{\lambda+1} \leq \lambda v < (n+1)k-(t-1)\binom{n+1}{\lambda+1}$.

We also prove that if no point appears in more than three blocks, then the blocks of a $\PD_2(v,k,2)$ can be ordered so that no ordered pair occurs more than once. This produces a directed packing design and we show that the corresponding directed packing number is equal to $n$ when $nk-\binom{n}{3} \leq 2v < (n+1)k-\binom{n+1}{3}$. Such directed packing designs yield $(k-t)$-insertion/deletion codes.
\end{abstract}

\section{Introduction}
Given positive integers $v$, $k$, $t$ and $\lambda$ with $v \geq k \geq t$, a {\em packing design} $\PD_{\lambda}(v,k,t)$ is a pair $(V,\cB)$, where $V$ is a $v$-set and $\cB$ is a collection of $k$-subsets of $V$ such that each $t$-subset of $V$ appears in at most $\lambda$ elements of $\cB$.  The elements of $V$ are called {\em points} and the elements of $\cB$ are {\em blocks}.  Packing designs have been extensively studied; the reader is referred to~\cite{MillsMullin} and~\cite{StinsonWeiYin}.

For given integers $v$, $k$, $t$ and $\lambda$, the existence of a $\PD_{\lambda}(v,k,t)$ is trivial; simply take a single $k$-subset of points.  Thus of more interest is the {\em size} of a packing design, defined as its number of blocks.  We denote by $\PD_{\lambda}(n;v,k,t)$ a $\PD_{\lambda}(v,k,t)$ of size $n$.  The maximum size of a $\PD_{\lambda}(v,k,t)$ is called the {\em packing number} $\PDN_{\lambda}(v,k,t)$.  The case where $t=2$ has received particular attention in the literature, and in this case we remove $t$ from the notation, and write $\PD_{\lambda}(v,k)$, $\PD_{\lambda}(n;v,k)$ and $\PDN_{\lambda}(v,k)$.  If $\lambda=1$, we also drop it from the notation.

\begin{example}
The following blocks form a $\PD(4;6,3)$ on the point set $\{0,\ldots,5\}$.
\[
\begin{array}{l}
\{0,1,2\} \\
\{0,3,4\} \\
\{1,3,5\} \\
\{2,4,5\}
\end{array}
\]
\end{example}

When each $t$-subset of $V$ appears in {\em exactly} $\lambda$ blocks, a $\PD_{\lambda}(v,k,t)$ is called a {\em $t$-$(v,k,\lambda)$ design}, or a {\em balanced incomplete block design} $\mathrm{BIBD}(v,k,\lambda)$ when $t=2$.  These designs have been the subject of extensive study, and the reader is referred to~\cite{Handbook} for further details.  

Johnson~\cite{Johnson} and Sch\"{o}nheim~\cite{Schonheim} gave the following bound for packings:
\begin{equation} \label{Schonheim bound}
\PDN_{\lambda}(v,k,t) \leq U_{\lambda}(v,k,t),
\end{equation}
where
\[
U_{\lambda}(v,k,t) = \left\lfloor \frac{v}{k} \left\lfloor \frac{v-1}{k-1} \left\lfloor \cdots \left\lfloor \frac{v-t+2}{k-t+2} \left\lfloor \frac{\lambda (v-t+1)}{k-t+1} \right\rfloor \right\rfloor \cdots \right\rfloor \right\rfloor \right\rfloor.
\]
Hanani \cite{Hanani} improved the Johnson-Sch\"{o}nheim bound in the case $t=2$. In particular, he showed that when
\begin{equation}
\label{U-1 Condition}
\lambda(v-1) \equiv 0 \pmod{k-1} \quad\mbox{ and }\quad \lambda v(v-1) \equiv -1 \pmod{k},
\end{equation}
then $\PDN_{\lambda}(v,k) \leq U_{\lambda}(v,k,2)-1$.
As a result, we define
\[
B_\lambda(v,k) = \left\{\begin{array}{ll}
	U_{\lambda}(v,k,2)-1 & \mbox{if $\lambda$, $k$ and $v$ satisfy \eqref{U-1 Condition}}; \\
	U_{\lambda}(v,k,2) & \mbox{otherwise,}
\end{array}\right.
\]
and we can say that $\PDN_{\lambda}(v,k) \leq B_\lambda(v,k)$.
For further details, see~\cite{MillsMullin}.

In the case $\lambda=1$, Johnson~\cite{Johnson} proved another bound on the packing number, known as the {\em second Johnson Bound}.  Letting $d=\PDN(v,k,t)$, $d(d-1)(t-1) \geq q(q-1)v+2qr$, where $q$ and $r$ are the integers satisfying $kd=qv+r$ with $0 \leq r < v$.
When $v(t-1)<k^2$, this implies a bound that is slightly weaker but is easier to implement, namely:
\[
\PDN(v,k,t) \leq \lf \frac{v(k+1-t)}{k^2-v(t-1)} \rf.
\]

More recently, the third author \cite{horsley_bounds}, by generalizing Bose's proof of Fisher's inequality, gave new bounds on packing numbers for parameter sets satisfying $\lambda (v-1) < k(k-1)$.

\begin{theorem}[\cite{horsley_bounds}]\label{th:horsley_pack_1}
Given positive integers $v$, $k$ and $\lambda$ such that $3 \leq k < v$, let $r$ and $d$ be the integers with $0 \leq d < k-1$ such that $\lambda(v-1) = r(k-1)+d$.  If $d < r-\lambda$, then
	\[
	\PDN_{\lambda}(v,k)  \leq \lf \frac{v(r-1)}{k-1} \rf.
	\]
\end{theorem}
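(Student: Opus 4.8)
The plan is to argue by contradiction, adapting Bose's eigenvalue argument for Fisher's inequality but applying it only to the points of maximal replication. Suppose $(V,\cB)$ is a $\PD_\lambda(v,k)$ with $b=|\cB|$ blocks and $b(k-1)>v(r-1)$; since $b$ is an integer this is precisely the negation of $b\le\lf v(r-1)/(k-1)\rf$. For a point $z$ write $r_z$ for its replication number and, for $z\ne y$, write $\lambda_{zy}\le\lambda$ for the number of blocks containing $\{z,y\}$.

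First I would extract the elementary counting facts. Summing $\lambda_{zy}$ over all $y\ne z$ gives $r_z(k-1)=\sum_{y\ne z}\lambda_{zy}\le\lambda(v-1)=r(k-1)+d<(r+1)(k-1)$, so $r_z\le r$ for every point and hence $bk=\sum_z r_z\le vr$. Combined with $b(k-1)>vr-v$ this forces $b<v$ and $\sum_z(r-r_z)=vr-bk<v-b$. As each $r-r_z$ is a nonnegative integer, fewer than $v-b$ points have $r_z<r$, so $Q:=\{z:r_z=r\}$ satisfies $|Q|\ge b+1$.

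The heart of the argument is then linear-algebraic. Let $N_Q$ be the $|Q|\times b$ submatrix of the point--block incidence matrix with rows indexed by $Q$. Since $\operatorname{rank}N_Q\le b<|Q|$, there is a nonzero $w\in\mathbb{R}^V$ supported on $Q$ with $\sum_{z\in B}w_z=0$ for every $B\in\cB$. Expanding $0=\sum_{B\in\cB}\bigl(\sum_{z\in B}w_z\bigr)^2$ and substituting $\lambda_{zy}=\lambda-\mu_{zy}$ with $\mu_{zy}:=\lambda-\lambda_{zy}\ge0$ yields
\[
\sum_z(r_z-\lambda)w_z^2+\lambda\bigl(\textstyle\sum_z w_z\bigr)^2=\sum_{z\ne y}\mu_{zy}w_zw_y\le\sum_z w_z^2\sum_{y\ne z}\mu_{zy},
\]
the last step being $\mu_{zy}w_zw_y\le\mu_{zy}(w_z^2+w_y^2)/2$. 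Now $w$ is supported on $Q$, where $r_z=r$ and $\sum_{y\ne z}\mu_{zy}=\lambda(v-1)-r(k-1)=d$, so this collapses to $(r-\lambda-d)\sum_z w_z^2+\lambda\bigl(\sum_z w_z\bigr)^2\le0$. That is impossible, because $w\ne0$, $\lambda>0$, and $d<r-\lambda$ by hypothesis (which incidentally forces $r\ge\lambda+1$, so the stated bound is never vacuous).

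The step I expect to be the crux --- more ``key idea'' than ``obstacle'' --- is the decision to restrict the incidence matrix to the full-replication set $Q$ rather than to work over all of $V$. Over general points the excess sums $\sum_{y\ne z}\mu_{zy}$ vary and the quadratic form picks up terms of indefinite sign; restricting to $Q$ makes every such excess equal to the constant $d$, which is exactly what lets the hypothesis $d<r-\lambda$ be exploited. The counting in the second step is what guarantees $Q$ is large enough ($|Q|\ge b+1$) for the required kernel vector to exist.
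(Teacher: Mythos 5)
Your proof is correct. The paper itself gives no proof of this theorem---it is quoted from \cite{horsley_bounds}---but your argument is essentially the approach of that source as the paper describes it (generalising Bose's proof of Fisher's inequality): the counting step showing that the set $Q$ of maximum-frequency points has size at least $b+1$, the kernel vector supported on $Q$, and the positivity of the resulting quadratic form under the hypothesis $d<r-\lambda$ all check out.
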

In~\cite{horsley_bounds}, it is noted that this bound is less than or equal to $U_{\lambda}(v,k,2)$ whenever $r<k$.  However, it is a weaker bound than the second Johnson bound when $\lambda=1$. He also proved the following for the case $d \geq r-\lambda$.

\begin{theorem}[\cite{horsley_bounds}]\label{th:horsley_pack_2}
Given positive integers $v$, $k$ and $\lambda$ such that $3 \leq k < v$, let $r$ and $d$ be the integers with $0 \leq d < k=1$ such that $\lambda(v-1)=r(k-1)+d$.  If $d \geq r-\lambda$, then
\[
\PDN_{\lambda}(k,v) \leq \lf \frac{rv(\alpha-\beta)-\alpha v}{k(\alpha-\beta)-1}\rf,
\]
where:
\begin{enumerate}
\item $\alpha=\frac{r-\lambda+1}{2d+2}$ and $\beta=0$ if $k(r+\lambda-1)>2d+2$;
\item $\alpha=\frac{r-\lambda}{2d+2}$ and $\beta=\frac{r-\lambda}{2(d+k)}$ if $(r-\lambda)k(k-1)>2(d+1)(d+k)$.
\end{enumerate}
\end{theorem}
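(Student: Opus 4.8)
The plan is to adapt the argument behind Theorem~\ref{th:horsley_pack_1} to the range $d \ge r-\lambda$. Let $(V,\cB)$ be a $\PD_{\lambda}(v,k)$ with $b$ blocks, let $\rho_x$ be the number of blocks through a point $x$, and let $L$ be the \emph{leave}: the multigraph on $V$ in which $\{x,y\}$ has multiplicity $\lambda$ minus the number of blocks containing $\{x,y\}$. First I would record two elementary facts. Deleting $x$ from each block through it gives $\rho_x$ subsets of $V\setminus\{x\}$ of size $k-1$ with every point in at most $\lambda$ of them, so $(k-1)\rho_x \le \lambda(v-1)$ and hence $\rho_x \le r$ for every $x$; and counting pairs at a point gives $\deg_L(x) = (r-\rho_x)(k-1)+d$. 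Since $\sum_x\rho_x = bk$, the quantity $D := vr-bk = \sum_x(r-\rho_x)$ is nonnegative, and each point with $\rho_x\le r-1$ contributes at least $1$ to it, so $D \ge v-h$, where $h$ is the number of \emph{heavy} points (those with $\rho_x=r$, equivalently $\deg_L(x)=d$). Combining $bk = vr - D \le vr-(v-h)$ with an upper bound of the form $h \le (b-\beta v)/(\alpha-\beta)$ and solving for $b$ reproduces precisely the claimed inequality; thus the whole problem reduces to bounding the number of heavy points (when $d<r-\lambda$ the bound needed is simply $h\le b$).

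For this I would use a Fisher/Bose-type computation. Let $N$ be the $v\times b$ point-by-block incidence matrix and $N_H$ its submatrix of rows indexed by the heavy set $H$; writing $\ell_{xy}$ for the leave-multiplicity of $\{x,y\}$, one checks that $N_HN_H^{\mathsf{T}} = (r-\lambda)I_h + \lambda J_h - L_H$, where $L_H=(\ell_{xy})_{x,y\in H}$ has zero diagonal, entries in $[0,\lambda]$, and all row sums at most $d$ (since heavy points have leave-degree $d$). This matrix is positive semidefinite of rank at most $b$. When $d<r-\lambda$ one has $-L_H \succeq -dI_h$, hence $N_HN_H^{\mathsf{T}} \succeq (r-\lambda-d)I_h \succ 0$, forcing $h\le b$; that is exactly Theorem~\ref{th:horsley_pack_1}. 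In the present range the matrix need not be definite, so instead I would combine the facts that $N_HN_H^{\mathsf{T}}$ is positive semidefinite of rank at most $b$, that its trace is $hr$, that $L_H$ has trace $0$ with $\sum_{x\ne y}\ell_{xy}^2 \le \lambda\sum_{x\ne y}\ell_{xy}\le \lambda d h$, and that $L_H \succeq -dI_h$. Feeding these into a trace (or eigenvalue-multiplicity) estimate for $N_HN_H^{\mathsf{T}}$ yields an inequality of the form $h \le (\text{affine in }v\text{ and }b)$. The two regimes in the statement — distinguished by $k(r+\lambda-1)>2d+2$ versus $(r-\lambda)k(k-1)>2(d+1)(d+k)$ — should correspond to which of two natural estimates (one using only the degree bound $d$ on $L_H$, the other also its multiplicity bound $\lambda$) is the effective one, with $\alpha$ and $\beta$ the coefficients produced by optimising the estimate.

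Finally, substituting the resulting bound on $h$ into $D\ge v-h$ and $D=vr-bk$ and solving the linear inequality for $b$ gives $\PDN_{\lambda}(v,k) \le \frac{rv(\alpha-\beta)-\alpha v}{k(\alpha-\beta)-1}$; since the left side is an integer, the floor may be inserted. The linear-algebraic heart here is a direct generalisation of Bose's proof of Fisher's inequality — applied to the heavy points and corrected by the leave — and is conceptually clean; I expect the main obstacle to be the calibration, namely proving the estimate on $L_H$ sharply enough and verifying that the displayed side conditions on $k,r,\lambda,d$ are exactly those under which the optimal parameters take the stated form $\alpha,\beta$.
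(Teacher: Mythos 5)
First, a framing point: the paper does not prove Theorem~\ref{th:horsley_pack_2} at all --- it is quoted (with two apparent typos: ``$0 \leq d < k=1$'' should read ``$0 \leq d < k-1$'', and $\PDN_{\lambda}(k,v)$ should read $\PDN_{\lambda}(v,k)$) from \cite{horsley_bounds}, so there is no in-paper argument to compare against. Judged on its own terms, your outer reduction is correct and is the right skeleton: with $h$ the number of points of frequency $r$, the identity $vr-bk=\sum_x(r-\rho_x)\geq v-h$, combined with a lemma of the form $b\geq \alpha h+\beta(v-h)$ and the positivity of $k(\alpha-\beta)-1$, does yield $b\leq \bigl(rv(\alpha-\beta)-\alpha v\bigr)/\bigl(k(\alpha-\beta)-1\bigr)$, and the specialization $\alpha=1$, $\beta=0$ (i.e.\ $h\leq b$) recovers Theorem~\ref{th:horsley_pack_1} exactly. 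One correction: the two side conditions in the statement are precisely the conditions $k(\alpha-\beta)>1$ for the respective choices of $\alpha$ and $\beta$ (condition 1 seems to carry a sign typo, $r+\lambda-1$ for $r-\lambda+1$); they are there so that the final division preserves the direction of the inequality, not --- as you suggest --- to select between two different estimates.

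The genuine gap is that the lemma $b\geq \alpha h+\beta(v-h)$, which is the entire quantitative content of the theorem, is never established. You reduce it to ``a trace (or eigenvalue-multiplicity) estimate for $N_HN_H^{\mathsf{T}}$'' and defer the derivation of $\alpha=\frac{r-\lambda+1}{2d+2}$ and $\beta=\frac{r-\lambda}{2(d+k)}$ to ``calibration'', but no candidate inequality is written down, let alone proved; the ingredients you list ($\mathrm{tr}\,L_H=0$, $\sum\ell_{xy}^2\leq \lambda dh$, $L_H\succeq -dI_h$) do not visibly assemble into those specific constants, and you give no computation showing that they do. Moreover, the mechanism you propose cannot deliver case 2 as described: a Gram matrix indexed only by the heavy points is blind to the $\beta(v-h)$ contribution of the non-heavy points, which is essential there and is the only place $k$ enters $\beta$; one would have to work with the full matrix $NN^{\mathsf{T}}=\mathrm{diag}(\rho_x-\lambda)+\lambda J-L$ and allow each point's contribution to depend on its frequency and leave-degree. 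As it stands, the proposal is a correct reduction together with an unproved key lemma and an incompletely specified plan for proving it, so it does not constitute a proof of the stated bound.
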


The packing numbers $\PDN(v,k)$ for $k \in \{3,4\}$ are known, many values for $k=5$ are known and some for $k=6$. See~\cite{MillsMullin} and the update \cite{StinsonWeiYin} for details. We note that for $k=3$ and all $v \geq 3$, $\PDN_{\lambda}(v,3) = B_{\lambda}(v,3)$.  For $k=4$, $\PDN_{\lambda}(v,k) = B_{\lambda}(v,k)$, except when $\lambda=1$ and $v \equiv 7$ or $10 \pmod{12}$ (in which case $\PDN(v,k) = B(v,k)-1$ if $v \notin \{10,19\}$), or $(\lambda,v) = \{(1,8), (1,8), (1,11), (1,17), (2,9), (3,6)\}$.
It is well known that a $\PD(n; v,k,t)$ is equivalent to a binary code of length $v$, size $n$, constant weight $k$, and minimum Hamming distance at least $2(k - t + 1)$, see \cite{Tonchev}.

In this paper, we also study a variant of packing designs in which the order of elements in the blocks matters.  A {\em directed packing design}, $\DPD_{\lambda}(v,k,t)$, is a pair $(V,\cB)$, where $V$ is a set of $v$ points and $\cB$ is a collection of ordered $k$-tuples of distinct elements of $V$, called {\em blocks} such that each $t$-tuple of elements of $V$ appears in at most $\lambda$ blocks. Note that for an ordered $t$-tuple to appear in a block $B$, it must form a subsequence of $B$ but its elements need not be consecutive in $B$; thus, for example, the ordered triple $(0,2,4)$ appears in the $k$-tuple $(0,1,2,3,4)$.

As in the undirected case, the question of interest is the maximum size of a directed packing design.  We denote a directed packing design with $n$ blocks by $\DPD_{\lambda}(n;v,k,t)$, and define the {\em directed packing number} $\DPDN_{\lambda}(v,k,t)$ to be the maximum value of $n$ for which a $\DPD_{\lambda}(n;v,k,t)$ exists. Again, when $t=2$ we remove $t$ from the notation and write $\DPD_{\lambda}(v,k)$, $\DPD_{\lambda}(n;v,k)$ and $\DPDN_{\lambda}(v,k)$. When $\lambda=1$, we also remove it from the notation. For directed packing designs our primary focus is on the case where $(t,\lambda)=(2,1)$.

\begin{example}
The following blocks form a $\DPD(4;6,4)$ on the point set $\{0,\ldots,5\}$.
\[
\begin{array}{l}
(0,1,2,3) \\
(4,3,5,0) \\
(5,3,2,4) \\
(2,1,0,5)
\end{array}
\]
\end{example}

Note that since each $t$-tuple has $t!$ distinct orderings, replacing each $k$-tuple of a $\DPD_{\lambda}(v,k,t)$ with the $k$-set of its coordinates results in a $\PD_{t!\lambda}(v,k,t)$.  This fact gives us the following immediate bound on the directed packing number.
\begin{lemma} \label{directed vs undirected}
If $v \geq k \geq t$ and $\lambda$ are positive integers, then $\DPDN_{\lambda}(v,k,t) \leq \PDN_{t!\lambda}(v,k,t)$.  In particular, $\DPDN(v,k) \leq \PDN_{2}(v,k)$.
\end{lemma}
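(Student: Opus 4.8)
The plan is to take a directed packing design of maximum size and "forget" the order on each block, then verify that the resulting (multi)set system is an ordinary packing design with index $t!\lambda$. Concretely, set $n = \DPDN_{\lambda}(v,k,t)$ and let $(V,\cB)$ be a $\DPD_{\lambda}(n;v,k,t)$. For each ordered block $B = (b_1,\ldots,b_k) \in \cB$, let $\widehat{B} = \{b_1,\ldots,b_k\}$; since the coordinates of a block are distinct, $\widehat{B}$ is a $k$-subset of $V$. Let $\widehat{\cB}$ be the collection (counted with multiplicity) of all such $\widehat{B}$, so that $|\widehat{\cB}| = |\cB| = n$. It then suffices to show that $(V,\widehat{\cB})$ is a $\PD_{t!\lambda}(v,k,t)$, since this yields $\PDN_{t!\lambda}(v,k,t) \geq n = \DPDN_{\lambda}(v,k,t)$, which is the desired inequality.

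The only thing requiring verification is the index condition: every $t$-subset $S \subseteq V$ lies in at most $t!\lambda$ members of $\widehat{\cB}$. Fix such an $S$, and note that $S \subseteq \widehat{B}$ precisely when all $t$ elements of $S$ occur among the coordinates of $B$; in that case they occur in $B$ in one specific order, namely the order they inherit from $B$, so $B$ contains exactly one of the $t!$ orderings of $S$ as a subsequence. Consequently the multiplicity of $S$ in $\widehat{\cB}$ equals $\sum_{\sigma} m(\sigma)$, where the sum runs over the $t!$ ordered $t$-tuples $\sigma$ whose entry set is $S$, and $m(\sigma)$ denotes the number of blocks of $\cB$ in which $\sigma$ appears as a subsequence. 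Because $(V,\cB)$ is a $\DPD_{\lambda}(v,k,t)$ we have $m(\sigma) \leq \lambda$ for each $\sigma$, hence the multiplicity of $S$ in $\widehat{\cB}$ is at most $t!\lambda$, as needed. The "in particular" statement is just the case $t = 2$, $\lambda = 1$, using $2! = 2$.

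There is no substantial obstacle here; the proof is essentially the observation already flagged before the statement. The only points that warrant a moment's care are the bookkeeping with multiplicities (so that $|\widehat{\cB}| = n$ even when two distinct ordered blocks collapse to the same $k$-set, and recalling that repeated blocks are permitted in a packing design) and the fact that a single ordered block, having distinct entries, realizes at most one ordering of any given $t$-set. This last fact is exactly what prevents double-counting within a block and keeps the index of $\widehat{\cB}$ at $t!\lambda$ rather than larger.
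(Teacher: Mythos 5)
Your proposal is correct and is exactly the argument the paper uses: it unorders each block and observes that a block containing a given $t$-set realizes precisely one of its $t!$ orderings, so the multiplicity of the $t$-set is at most $t!\lambda$. The paper states this as an immediate consequence of the remark preceding the lemma; your write-up just supplies the same reasoning in full detail.
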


As a consequence of Lemma~\ref{directed vs undirected} we have
\[
\DPDN_{\lambda}(v,k,t) \leq U_{t!\lambda}(v,k,t) \mbox{ and } \DPDN_{\lambda}(v,k) \leq B_{2\lambda}(v,k).
\]

When every ordered pair of points appears exactly once, a $\DPD_\lambda(v,k)$ is known as a directed balanced incomplete block design, denoted DBIBD$(v,k,\lambda)$.
In order for this to happen, it is not hard to see that we require that
\begin{equation}\label{DBIBD Nec}
2\lambda(v-1)\equiv 0 \pmod{k-1} \mbox{ and } 2\lambda v(v-1)\equiv 0 \pmod{k(k-1)}.
\end{equation}
In \cite{ChangGiovanni1, ChangGiovanni2} asymptotic existence of DBIBD$(v,k,\lambda)$ is shown. Specifically, they show that for $v$ and $k$ satisfying the necessary conditions \eqref{DBIBD Nec}, a DBIBD$(v,k,\lambda)$ exists for all $v > \exp(k^{3k^6})$ when $k\equiv 0, 2,3\pmod 4$, and for $v > \exp(\exp(k^{6k^2}))$ when $k\equiv 1 \pmod 4$.
Additionally, DBIBDs with small block size have been studied \cite{AbelBennett, Bennet, SeberrySkillicorn, StreetSeberry, StreetWilson, WangYin}. 
For $k = 3,4,5, 6,7$ we have the following theorem, see \cite{BennettMahmoodi}, and \cite{AbelBennett}.
\begin{theorem}
	A {\rm DBIBD}$_\lambda(v, k,\lambda)$ exists for all $v$, $\lambda$ and $k \in \{3, 4, 5, 6\}$ if and only if $2\lambda(v-1)\equiv 0 \pmod{k-1}$  and $2\lambda v(v-1)\equiv 0 \pmod{k(k-1)}$, except when $(v, k, \lambda) = (15, 5, 1)$ or $(21, 6, 1)$.

	A {\rm DBIBD}$(v, 7, 1)$ exists if and only if $v \equiv 1 \pmod 3$ and $v(v - 1) \equiv 0 \pmod{21}$, with the exception of $v = 22$ and the possible exceptions of $v \in \{274, 358\}$.
\end{theorem}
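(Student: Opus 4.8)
The plan is to handle the two statements by block size, and for each $k$ to establish the existence spectrum in three stages: verify necessity by counting, reduce the problem to a bounded set of base parameters, and settle those via recursive and direct constructions together with nonexistence arguments for the sporadic exceptions.

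Necessity is routine double counting. If $(V,\cB)$ is a directed BIBD of index $\lambda$ with $r$ the replication number and $b$ the number of blocks, then counting the ordered pairs through a fixed point gives $r(k-1)=2\lambda(v-1)$, since an ordered $k$-tuple through $x$ contains $k-1$ ordered pairs involving $x$, while counting ordered pairs against blocks gives $b\binom{k}{2}=\lambda v(v-1)$, since an ordered $k$-tuple contains $\binom{k}{2}$ ordered pairs as subsequences. Integrality of $r$ and $b$ forces $2\lambda(v-1)\equiv 0\pmod{k-1}$ and $2\lambda v(v-1)\equiv 0\pmod{k(k-1)}$, and for $(k,\lambda)=(7,1)$ these reduce to $v\equiv 1\pmod 3$ and $v(v-1)\equiv 0\pmod{21}$.

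For sufficiency I would first reduce the dependence on $\lambda$: superimposing a DBIBD$(v,k,\lambda_1)$ and a DBIBD$(v,k,\lambda_2)$ on a common point set yields a DBIBD$(v,k,\lambda_1+\lambda_2)$, so it is enough to build designs for $\lambda$ running over one period of the congruence conditions (a short list of residues depending only on $k$); note that the only listed exceptions, $(15,5,1)$ and $(21,6,1)$, occur at $\lambda=1$, and at larger $\lambda$ an exceptional order is covered by pairing a design of complementary index. The bulk of the construction is then the recursion: for each admissible residue class of $v$ I would use the directed form of Wilson's fundamental construction --- start from a master directed group-divisible design, assign weights, substitute small ingredient directed GDDs of the weighted type, and fill the groups with DBIBDs (adjoining extra points as needed) --- equivalently exploiting that, up to finitely many genuine exceptions, the admissible orders for fixed $k$ are closed under pairwise balanced designs with admissible block sizes. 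Because $k\le 7$, the ingredients needed to seed this recursion (short cyclic or rotational difference families over $\mathbb{Z}_n$ or small abelian groups, and directed GDDs of the handful of group types that occur) can be written down explicitly or found by computer, and they suffice to cover all sufficiently large $v$.

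What is left is a finite verification: construct DBIBDs for the small admissible orders the generic recursion misses, and prove that DBIBD$(15,5,1)$, DBIBD$(21,6,1)$ and DBIBD$(22,7,1)$ do not exist despite satisfying the congruences --- ideally through a structural/counting argument on the configuration forced around a point, or else by exhaustive search. For $(k,\lambda)=(7,1)$, $v\in\{274,358\}$ are exactly the orders for which no explicit construction is available and the recursion does not reach, so they remain as possible exceptions. The main obstacle is not the recursion, which is standard bookkeeping, but assembling a complete and correct stock of small ingredient designs --- especially directed GDDs with the awkward group types near the low end of the spectrum for $k=5,6,7$ --- and disposing of the sporadic orders: making the nonexistence proofs for $(15,5,1)$, $(21,6,1)$, $(22,7,1)$ airtight and checking that $274$ and $358$ genuinely resist every available construction is where the delicate work lies.
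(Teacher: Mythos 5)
This theorem is not proved in the paper at all: it is quoted as a known result from the literature, with the proofs residing in the cited references (Bennett--Mahmoodi's survey and Abel--Bennett for $k=7$, together with the earlier papers of Seberry--Skillicorn, Street--Seberry, Street--Wilson, Bennett et al.\ that they summarize). So there is no ``paper's own proof'' to compare against; what can be assessed is whether your proposal would constitute a proof on its own. Your necessity argument is correct and complete: the two counts $r(k-1)=2\lambda(v-1)$ and $b\binom{k}{2}=\lambda v(v-1)$ are exactly right, and the specialization to $(k,\lambda)=(7,1)$ is correct. Your description of the sufficiency strategy --- reduction to a period of $\lambda$ by superposition, Wilson-style recursion via directed GDDs and PBD-closure, plus a finite stock of base designs --- is also an accurate account of how the cited literature actually proceeds.

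The gap is that everything of substance is deferred. A proof of an existence spectrum of this kind \emph{is} its stock of ingredient designs and its nonexistence arguments; your proposal names these as tasks rather than carrying them out. Concretely: (1) none of the difference families, directed GDDs, or small-order direct constructions are exhibited, and for $k=5,6,7$ these are precisely where the published proofs expend most of their effort; (2) the nonexistence of a DBIBD$(15,5,1)$, DBIBD$(21,6,1)$ and DBIBD$(22,7,1)$ is asserted to be provable ``ideally through a structural/counting argument \ldots or else by exhaustive search,'' which is not an argument --- these cases do not fall to the divisibility conditions and each requires its own dedicated proof; (3) your remark that an exceptional order at $\lambda=1$ is ``covered by pairing a design of complementary index'' does not work as stated, since one cannot superimpose two nonexistent $\lambda=1$ designs to get the $\lambda=2$ design at $v=15$ or $v=21$; a direct construction at index $2$ is needed there and is not supplied. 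As it stands the proposal is a correct research programme matching the literature's methodology, but not a proof.
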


More generally, directed packing designs with $(t,\lambda)=(2,1)$ and small block size have also been studied and we have the following theorem.
\begin{theorem}[\cite{AbelEtAl, AssafShalabyYin2, AssafShalabyYin, AssafShalabyMahmoodiYin,  ShalabyYin, Skillicorn k=4, Skillicorn k=3}] \mbox{}
	For $t=2$ and $k \in \{3,4,5\}$, 
	$\DPDN_\lambda(v,k) = B_{2\lambda}(v,k)$, except that
	\begin{itemize}
	\item $\DPDN(v,k) = U_2(v,k,2)-1$ for  $(v,k) \in \{(9,4), (13,5)\}$,
	\item $\DPDN(15,5) = U_2(15,5,2)-2$,
	\end{itemize}
	and except possibly when $k=5$ and $(v,\lambda) \in \{(19,1), (27,1), (43,3)\}$.
\end{theorem}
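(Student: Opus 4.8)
Since this theorem collects results spread across the cited papers, my plan is to separate it into a universal upper bound, a family of matching constructions, and a finite amount of case analysis for the sporadic orders. For the upper bound I would not attempt anything new: Lemma~\ref{directed vs undirected} already gives $\DPDN_\lambda(v,k)\le\PDN_{2\lambda}(v,k)\le B_{2\lambda}(v,k)$, so for all but the listed parameter sets it suffices to exhibit a $\DPD_\lambda(v,k)$ of size $B_{2\lambda}(v,k)$. For the three genuine exceptions with $k\in\{4,5\}$ one needs a sharper bound, which comes either from the known improvement to the corresponding undirected packing number (for instance $\PDN_2(9,4)=B_2(9,4)-1$, so $\DPDN(9,4)\le U_2(9,4,2)-1$) or, in the $k=5$ cases, from a direct counting argument exploiting the fact that within a single block each unordered pair of points is ordered in exactly one way; this forces a larger ``leave'' than the undirected bound detects, and drives the value down by one or two.

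The constructive half is where the work lies, and I would organise it by residue class of $v$. When $v$, $k$, $\lambda$ satisfy the divisibility conditions~\eqref{DBIBD Nec}, a DBIBD$(v,k,\lambda)$ is itself a $\DPD_\lambda(v,k)$ meeting the bound, so the known existence theorems for directed designs with $k\in\{3,4,5\}$ dispatch those classes outright. For the remaining classes I would build near-optimal directed packings by the standard two-step method: (i) for a bounded set of small ``ingredient'' orders, write down explicit optimal directed packings, typically as directed difference families over a cyclic or abelian group whose development meets every ordered pair at most $\lambda$ times and leaves only a controlled, fixed set of uncovered pairs, which is then repaired with a constant number of extra blocks; and (ii) propagate these to all large $v$ in each class by Wilson/PBD-closure type recursions --- take a (near-)resolvable group divisible design with group sizes drawn from the admissible classes, place an optimal directed packing on each group and a directed transversal design on each block, and verify that the block counts sum to $B_{2\lambda}(v,k)$. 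For $k=3,4,5$ the requisite ingredient packings, DBIBDs, and transversal designs are all available in the literature, so this machinery goes through.

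The residual difficulty --- and the reason the theorem carries exceptions and possible exceptions rather than a clean ``if and only if'' --- is the handful of small orders where neither the difference-family constructions nor the recursions produce something meeting $B_{2\lambda}(v,k)$, and where the true value must be pinned down by hand or by exhaustive/heuristic computer search: the exact determinations $\DPDN(9,4)=U_2(9,4,2)-1$, $\DPDN(13,5)=U_2(13,5,2)-1$, $\DPDN(15,5)=U_2(15,5,2)-2$, together with the still-open cases $k=5$ with $(v,\lambda)\in\{(19,1),(27,1),(43,3)\}$. I expect this boundary region to be the main obstacle, exactly as it is in the undirected theory for $k\le 5$; everything above it is either immediate from Lemma~\ref{directed vs undirected} or a matter of assembling constructions that already exist for these block sizes.
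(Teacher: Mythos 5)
This theorem is not proved in the paper at all: it is imported verbatim from the cited references (Skillicorn for $k=3,4$, Assaf--Shalaby--Yin and Abel et al.\ for $k=4,5$), so there is no internal proof to compare your proposal against. Judged on its own terms, your outline is an accurate reconstruction of how those papers proceed: the upper bound is exactly $\DPDN_\lambda(v,k)\le\PDN_{2\lambda}(v,k)\le B_{2\lambda}(v,k)$ via Lemma~\ref{directed vs undirected}, the matching lower bounds come from DBIBDs when \eqref{DBIBD Nec} holds and otherwise from directed difference families plus GDD/PBD-closure recursions, and the genuine exceptions $(9,4)$, $(13,5)$, $(15,5)$ and the unresolved cases are settled (or left open) by ad hoc arguments and exhaustive search. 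Your justification of $\DPDN(9,4)\le U_2(9,4,2)-1$ via $\PDN_2(9,4)=B_2(9,4)-1$ is consistent with the exceptional list for $k=4$ given earlier in the paper. Be aware, though, that what you have written is a strategy rather than a proof: none of the ingredient packings, difference families, or recursive frames are actually exhibited or verified, so as a standalone argument it establishes nothing beyond the upper bound; its correctness rests entirely on the same external literature the paper itself cites.
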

Directed designs are well known to be connected to insertion/deletion-correcting codes \cite{LevenshteinRussian}, see also \cite{LevenshteinHandbook}. In particular, given a $\DPD(v,k,t)$, $(V, \cB)$, taking the alphabet to be $V$ and the codewords as the blocks of the design, $\cB$, we obtain a $(k-t)$-insertion/deletion code. Specifically, since each $t$-subsequence only appears in a single codeword, if $(k-t)$ symbols are changed, we cannot reach another codeword.
Much of the literature connecting directed packing designs to insertion/deletion codes concentrates on perfect insertion/deletion codes, which correspond to DBIBDs \cite{Bours, Klein, Levenshtein, WangYin}. However, we will see that our results determine the maximum size of certain insertion/deletion codes, even in cases where the corresponding directed design cannot exist because the necessary conditions \eqref{DBIBD Nec} are not satisfied.

With reference to the correspondence between (undirected) packing designs and constant-weight codes, we are considering large values of $k$ relative to $v$, which by the Johnson-Sch\"onheim bound limits the number of possible codewords. However, in this case the minimum distance in a constant-weight code arising from a $\PD(v,k,t)$ is $2(k-t+1)$, and hence the number of errors which may be detected/corrected is large.
Similarly, the number of substitutions detected, $k-2$, in an insertion/deletion code arising from a $\DPD(v,k)$ is large, which may be desirable in particular situations.
In fact, we show in these cases that the maximum possible number of codewords is much smaller than the Johnson-Sch\"onheim bound.

The remainder of the paper proceeds as follows.  In Section~\ref{Sec:Prelim}, we describe some basic properties of packings which are useful in determining packing numbers.  In Section~\ref{Sec:Main}, we determine the value of $\PDN_{\lambda}(v,k,t)$ when $k$ is bounded below by $\frac{\lambda v}{n+1} + \frac{t-1}{\lambda+1}\binom{n}{\lambda}$ for some positive integer $n$.  Section~\ref{sec:DPN} extends a special case of this result to directed packings.  Finally, in Section~\ref{Sec:Conclusion}, we summarize our results, discuss the performance of known bounds,  and state a conjecture regarding the value of $\DPDN_{\lambda}(v,k)$.

\section{Preliminaries} \label{Sec:Prelim}

In this section, we give some preliminary results regarding packings, including bounds on the number of times elements may occur within a (directed) packing design.

\begin{definition}
Let $(V,\cB)$ be a $\PD_{\lambda}(v,k,t)$ or a $\DPD_{\lambda}(v,k,t)$, and let $x \in V$.  The {\em frequency} of $x$, denoted
$r_{\cB}(x)$, is the number of blocks in $\cB$ in which $x$ occurs.  If the set $\cB$ of blocks is clear from context, we denote the frequency by
$r(x)$.
\end{definition}

In the case that every $t$-set of points appears in {\em exactly} $\lambda$ blocks, so that we have a $t$-design, then each point has the same frequency, i.e.\ for all $x \in V$, $r(x)=r$, the replication number of the design.  

Straightforward counting gives the following well-known bound on
$r(x)$.
\begin{lemma}
If $(V,\cB)$ is a $\PD_{\lambda}(v,k,t)$ and $x \in V$, then $
{r(x)} \leq \lf \lambda\binom{v-1}{t-1}/\binom{k-1}{t-1} \rf$.
\end{lemma}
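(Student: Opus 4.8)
The plan is to prove this by a standard double-counting argument applied to the incidences between the blocks through $x$ and the $t$-subsets of $V$ containing $x$. Fix a point $x \in V$ and consider the set $\mathcal{S}$ of all $t$-subsets of $V$ that contain $x$; since $|V| = v$, we have $|\mathcal{S}| = \binom{v-1}{t-1}$. I would count, in two ways, the number $N$ of incident pairs $(S,B)$ with $S \in \mathcal{S}$, $B \in \cB$, and $S \subseteq B$.

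First I would count $N$ by summing over blocks. Only the $r(x)$ blocks containing $x$ can contribute, and each such block $B$ contains exactly $\binom{k-1}{t-1}$ members of $\mathcal{S}$, namely the sets obtained by adjoining to $x$ any $(t-1)$-subset of the remaining $k-1$ points of $B$; here the hypothesis $k \geq t$ guarantees $\binom{k-1}{t-1} \geq 1$, so this quantity is positive. Hence $N = r(x)\binom{k-1}{t-1}$. Next I would count $N$ by summing over $\mathcal{S}$: by the defining property of a packing design, each $S \in \mathcal{S}$ lies in at most $\lambda$ blocks, so $N \leq \lambda\binom{v-1}{t-1}$.

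Combining the two counts gives $r(x)\binom{k-1}{t-1} \leq \lambda\binom{v-1}{t-1}$, hence $r(x) \leq \lambda\binom{v-1}{t-1}/\binom{k-1}{t-1}$. Since $r(x)$ is a nonnegative integer, it is at most $\lf \lambda\binom{v-1}{t-1}/\binom{k-1}{t-1} \rf$, as required. There is no real obstacle here: the only points needing a word of care are that $\binom{k-1}{t-1}$ is nonzero (ensured by $k \geq t$) so the division is legitimate, and that the final step uses integrality of $r(x)$ to pass to the floor.
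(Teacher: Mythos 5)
Your proof is correct and is precisely the ``straightforward counting'' argument the paper alludes to (the paper gives no explicit proof, but this double count of pairs $(S,B)$ with $x \in S \subseteq B$ is the standard and intended one). The two counts, the use of the packing condition on each $t$-set, and the final appeal to integrality of $r(x)$ are all exactly right.
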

It follows that $r(x) \leq \lf \frac{2\lambda(v-1)}{k-1}\rf$ for any point $x$ in a $\DPD_{\lambda}(v,k)$, since removing the direction from the blocks of such a design results in a $\PD_{2\lambda}(v,k)$.

We now define some useful notation that we will use throughout.
Let $(V,\cB)$ be a $\PD_{\lambda}(n;v,k,t)$ or a $\DPD_{\lambda}(n;v,k,t)$. Given a point $x\in V$, we will use $\cB_x$ to denote the set of blocks that contain $x$; note that $r_{\cB}(x) =|\cB_x|$. 
For each $i \in \{0,\ldots,n\}$, let $V_i$ be the set of points with frequency $i$ and $N_i=|V_i|$ be the number of points with frequency $i$.  By counting the number of distinct points, as well as the number of points in the blocks with repetition, we get the following equations:
\begin{eqnarray}
\sum_{i=0}^n N_i &=& v \label{MillsEquation1} \\
\sum_{i=1}^{n} i N_i &=& nk \label{MillsEquation2}
\end{eqnarray}

The following lemma gives an inequality relating the values of the $N_i$.
\begin{lemma}\label{General N-bound}
In a $\PD_{\lambda}(n;v,k,t)$,
\[
\sum_{i=1}^n \binom{i}{\lambda+1} N_i \leq (t-1)\binom{n}{\lambda+1}.
\]
\end{lemma}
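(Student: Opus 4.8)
The plan is to count, for each $t$-subset $T$ of $V$, how many blocks contain $T$, and compare this with a lower bound coming from the frequencies of the points. First I would fix a point $x \in V$ with frequency $r(x) = i \geq \lambda+1$, and look at the blocks $\cB_x$ containing $x$. Any $\lambda+1$ of these blocks, say $B_1, \dots, B_{\lambda+1}$, all contain $x$; since the packing condition forbids any $t$-subset from lying in more than $\lambda$ of them, the common intersection $B_1 \cap \cdots \cap B_{\lambda+1}$ can contain at most $t-1$ points. In fact I would want something a little stronger phrased in terms of pairs $(x, S)$ where $S$ is a $(\lambda+1)$-subset of $\cB_x$: associate to such a pair the point $x$, which lies in $\bigcap_{B \in S} B$, and note that this intersection has size at most $t-1$.

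The key counting step is then a double count of triples. Consider pairs $(S, y)$ where $S$ is a $(\lambda+1)$-subset of $\cB$ (an unordered choice of $\lambda+1$ blocks) and $y$ is a point lying in every block of $S$. On one hand, for a fixed point $y$ of frequency $i$, the number of $(\lambda+1)$-subsets of $\cB_y$ is exactly $\binom{i}{\lambda+1}$, so summing over all points gives $\sum_{i=1}^n \binom{i}{\lambda+1} N_i$ such pairs. On the other hand, for a fixed $(\lambda+1)$-subset $S$ of blocks, the number of points common to all of them is $|\bigcap_{B\in S} B|$, which by the packing condition is at most $t-1$ (if it were $\geq t$, some $t$-subset would be in all $\lambda+1$ blocks of $S$, contradicting $\PD_\lambda$). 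Hence the total is at most $(t-1)\binom{n}{\lambda+1}$, where $n = |\cB|$. Combining the two gives exactly the claimed inequality.

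I would be slightly careful about one subtlety: the bound $|\bigcap_{B\in S}B| \le t-1$ requires that the $\lambda+1$ blocks in $S$ are genuinely distinct as elements of the collection $\cB$, which is why $S$ should be taken as a $(\lambda+1)$-subset of the index set $\{1,\dots,n\}$ rather than of the underlying set of $k$-sets (blocks may repeat as sets if $\cB$ is a multiset, though with $\lambda \ge 1$ the packing condition on $t$-sets forces some structure; in any case indexing by blocks rather than by $k$-sets makes the count clean and $\binom{n}{\lambda+1}$ the correct total number of $(\lambda+1)$-subsets). The only real content is the observation that $t$ points common to $\lambda+1$ blocks would violate the definition of a $\PD_\lambda(v,k,t)$; the rest is a routine interchange of summation order. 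I do not expect any genuine obstacle here — the main thing to get right is the bookkeeping that makes both sides of the double count land on the stated expressions.
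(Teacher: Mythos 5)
Your proposal is correct and is essentially the paper's own argument: both double-count pairs $(S,y)$ with $S$ a $(\lambda+1)$-subset of $\cB$ and $y\in\bigcap_{B\in S}B$, bounding each intersection by $t-1$ via the packing condition and identifying the other side of the count with $\sum_i\binom{i}{\lambda+1}N_i$. Your remarks about indexing blocks by $\{1,\dots,n\}$ and the trivial case $n\leq\lambda$ match the paper's treatment.
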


\begin{proof}
If $n \leq \lambda$, then the result is trivial.  Otherwise, each $(\lambda + 1)$-set of blocks from
$\mathcal{B}$ can be a subset of $\mathcal{B}_x$
for at most $t-1$ points $x \in V$, for otherwise $t$ points would appear together in more than $\lambda$ blocks. The number of $(\lambda + 1)$-subsets of $\mathcal{B}$ is $\binom{n}{\lambda+1}$ and, for each integer $i \geq \lambda + 1$, the number of $(\lambda + 1)$-subsets of $\mathcal{B}_x$ for each
point $x \in V_i$ is $\binom{i}{\lambda+1}$.
Thus $\sum_{i=\lambda+1}^n \binom{i}{\lambda+1} N_i \leq (t-1)\binom{n}{\lambda+1}$ and the result follows.
\end{proof}

We note, in addition, that there cannot be too many points of high frequency.

\begin{lemma}
In any $\PD_{\lambda}(n;v,k,t)$, no $t$ points can have frequencies with sum greater than $(t-1)n+\lambda$.  In particular, there can be at most $t-1$ points with frequency greater than $((t-1)n+\lambda)/t$.
\end{lemma}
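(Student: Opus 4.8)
The plan is to prove the first statement by a double-counting argument over the blocks, and then derive the ``in particular'' clause as an immediate corollary via averaging. Fix any $t$ points $x_1,\ldots,x_t \in V$ and write $r_j = r(x_j)$ for their frequencies. For each block $B \in \cB$, let $a(B)$ denote the number of indices $j$ with $x_j \in B$. Counting incidences between the set $\{x_1,\ldots,x_t\}$ and the blocks in two ways gives $\sum_{B\in\cB} a(B) = \sum_{j=1}^t r_j$.

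Next I would bound $\sum_B a(B)$ from above. The key observation is that $a(B) = t$ precisely when $B$ contains all of $x_1,\ldots,x_t$, and since the $t$-set $\{x_1,\ldots,x_t\}$ appears in at most $\lambda$ blocks, there are at most $\min(\lambda,n)$ such blocks; every remaining block has $a(B) \leq t-1$. Hence
\[
\sum_{j=1}^t r_j = \sum_{B\in\cB} a(B) \leq \min(\lambda,n)\cdot t + \bigl(n-\min(\lambda,n)\bigr)(t-1) = (t-1)n + \min(\lambda,n) \leq (t-1)n+\lambda,
\]
which is the claimed bound. (When $n \leq \lambda$ the inequality $\sum_j r_j \leq tn \leq (t-1)n+\lambda$ holds trivially, so the estimate is consistent in that degenerate range as well.)

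For the second statement, suppose for contradiction that there were $t$ distinct points each with frequency strictly greater than $((t-1)n+\lambda)/t$. Summing these $t$ frequencies would give a total strictly greater than $t\cdot\frac{(t-1)n+\lambda}{t} = (t-1)n+\lambda$, contradicting the bound just established. Therefore at most $t-1$ points can have frequency exceeding $((t-1)n+\lambda)/t$. I do not anticipate any real obstacle here; the only point requiring a moment's care is correctly accounting for the at-most-$\lambda$ blocks containing all $t$ chosen points, and noting that this is exactly where the packing condition enters.
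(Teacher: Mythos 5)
Your proof is correct and is essentially the same argument as the paper's: both rest on the observation that at most $\lambda$ of the $n$ blocks can contain all $t$ chosen points, combined with a double count of point--block incidences. The paper phrases this by counting the blocks that fail to contain the whole $t$-set (at least $n-\lambda$ of them, and at most $\sum_{x\in T}(n-r(x))$ by a union bound), which is just the complementary form of your inequality $\sum_B a(B) \leq \lambda t + (n-\lambda)(t-1)$.
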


\begin{proof}
Suppose $T$ is a $t$-set of points in a $\PD_{\lambda}(n;v,k,t)$. For each $x \in T$, there are $n-r(x)$ blocks that $x$ does not occur in and so in total there are at most $tn-\sum_{x \in T}r(x)$ blocks that are not supersets of $T$. By the definition of a $\PD_{\lambda}(n;v,k,t)$, at most $\lambda$ blocks can be supersets of $T$ and hence at least $n-\lambda$ blocks are not. So $tn-\sum_{x \in T}r(x)  \geq n-\lambda$ and rearranging gives the result.
\end{proof}

We close this section by considering how the maximum size of a (directed) packing may change when the number of points or the block size changes.  Note that the blocks of a $\PD_{\lambda}(n;v,k,t)$ (resp.\ $\DPD_{\lambda}(n;v,k,t)$) form a $\PD_{\lambda}(n;v',k,t)$ (resp.\ $\DPD_{\lambda}(n;v',k,t)$) for any $v' \geq v$.  Moreover, if we remove a fixed number of elements of each block of a $\PD_{\lambda}(n;v,k,t)$ (resp.\ $\DPD_{\lambda}(n;v,k,t)$) we obtain a $\PD_{\lambda}(n;v,k',t)$ (resp.\ $\DPD_{\lambda}(n;v,k',t)$) for $k' \leq k$.  These observations give us the following.
\begin{theorem} \label{add-delete points}
Suppose $v \geq k \geq t$ and $\lambda$ are positive integers, and let $v'$ and $k'$ be integers with $t \leq k' \leq k$ and $v \leq v'$.  Then:
\begin{enumerate}
\item $\PDN_{\lambda}(v,k,t) \leq \PDN_{\lambda}(v',k,t)$,
\item $\PDN_{\lambda}(v,k,t) \geq \PDN_{\lambda}(v,k',t)$,
\item $\DPDN_{\lambda}(v,k,t) \leq \DPDN_{\lambda}(v',k,t)$, and
\item $\DPDN_{\lambda}(v,k,t) \geq \DPDN_{\lambda}(v,k',t)$.
\end{enumerate}
\end{theorem}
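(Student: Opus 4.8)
The plan is to verify the two structural observations recorded just before the statement and to read the four inequalities off them. Everything rests on the single fact that neither operation involved --- adjoining isolated points to the point set, or deleting a prescribed number of points from every block --- can increase the number of blocks in which a given $t$-subset (in the directed case, a given $t$-tuple) appears.

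For parts 1 and 3, I would fix a $\PD_{\lambda}(n;v,k,t)$ (respectively, a $\DPD_{\lambda}(n;v,k,t)$) $(V,\cB)$ together with a set $V'\supseteq V$ with $|V'|=v'$, and note that $(V',\cB)$ is a (directed) packing of the same size and type: a $t$-subset (respectively, $t$-tuple) of $V'$ either lies entirely within $V$, so that its number of containing blocks is unchanged and hence at most $\lambda$, or it involves a point of $V'\setminus V$ and therefore appears in no block at all. Thus every size $n$ attainable on $v$ points is attainable on $v'$ points, which gives parts 1 and 3.

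For parts 2 and 4, I would start from a $\PD_{\lambda}(n;v,k,t)$ (respectively, a $\DPD_{\lambda}(n;v,k,t)$) and build $\cB'$ by deleting $k-k'$ points from each block; in the directed case I would delete a fixed set of $k-k'$ coordinate positions --- say the last $k-k'$ --- from every block, so that each shortened block is a subsequence of the block it was obtained from. The key step is monotonicity of coverage: a $t$-subset of a shortened block is a $t$-subset of the original block, and a $t$-tuple that is a subsequence of a shortened directed block is a subsequence of the original; hence the deletion never enlarges the set of blocks in which a given $t$-subset or $t$-tuple appears. Consequently every $t$-subset (respectively, $t$-tuple) still lies in at most $\lambda$ blocks of $\cB'$, so $\cB'$ is a $\PD_{\lambda}(n;v,k',t)$ (respectively, a $\DPD_{\lambda}(n;v,k',t)$), and parts 2 and 4 follow.

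I do not anticipate a genuine obstacle; the one point needing a word is that deleting points from two distinct blocks can cause them to coincide. This is harmless, since a (directed) packing may repeat a block: the condition ``each $t$-subset (or $t$-tuple) lies in at most $\lambda$ blocks'' is exactly the property preserved by the deletion, and it already bounds the multiplicity of every block by $\lambda$. In fact, when $\lambda=1$ no coincidence can arise at all, because two distinct blocks of a $\PD_{1}(v,k,t)$ meet in at most $t-1<k'$ points and so cannot be shortened to the same $k'$-set.
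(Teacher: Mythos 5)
Your proposal is correct and follows exactly the route the paper takes: the paper proves this theorem by the same two observations (a packing on $v$ points is a packing on any superset of the point set, and deleting a fixed number of elements from each block preserves the packing property), stated in the sentences immediately preceding the theorem. Your additional remarks on repeated blocks and the directed case are sound elaborations of details the paper leaves implicit.
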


\section{Packing numbers for large block size} \label{Sec:Main}

The main result in this section is Theorem~\ref{PDN}, which determines the value of $\PDN_{\lambda}(v,k,t)$ when the parameters satisfy $kn-(t-1)\binom{n}{\lambda+1} \leq \lambda v < k(n+1)-(t-1)\binom{n+1}{\lambda+1}$ for some nonnegative integer $n$. For fixed $t$ and $\lambda$, this is the case when  $k$ is large with respect to $v$.
Note that in expressions of the form $\binom{a}{b}$, we use the convention that if $a<b$, then $\binom{a}{b}=0$.
We begin by proving a result which generalises the second Johnson bound to packing designs with arbitrary values of $\lambda$.

\begin{theorem}\label{General Upper Bound}
Let $v \geq k \geq t$ and $\lambda$ be positive integers. If there exists a $\PD_{\lambda}(d;v,k,t)$, then
\[(t-1)\binom{d}{\lambda+1} \geq v\binom{q}{\lambda+1}+r\binom{q}{\lambda}\]
where $q$ and $r$ are the integers such that $dk=qv+r$ and $0 \leq r <v$.
\end{theorem}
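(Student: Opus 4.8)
The plan is to count, in two ways, the number of pairs $(S, x)$ where $S$ is a $(\lambda+1)$-subset of blocks of the $\PD_\lambda(d;v,k,t)$ and $x$ is a point lying in every block of $S$; more precisely, to bound this count from below using a convexity argument on the block sizes, and from above using Lemma~\ref{General N-bound}. Let $(V,\cB)$ be the packing, with $\cB = \{B_1,\ldots,B_d\}$, and for each point $x$ write $r(x) = r_{\cB}(x)$. On one hand, the number of pairs $(S,x)$ with $x$ in all blocks of $S$ is exactly $\sum_{x \in V} \binom{r(x)}{\lambda+1} = \sum_{i} \binom{i}{\lambda+1} N_i$, which by Lemma~\ref{General N-bound} is at most $(t-1)\binom{d}{\lambda+1}$. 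So the whole task is to show $\sum_{x \in V}\binom{r(x)}{\lambda+1} \geq v\binom{q}{\lambda+1} + r\binom{q}{\lambda}$.

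For the lower bound, the key facts are that $\sum_{x\in V} r(x) = dk$ (this is \eqref{MillsEquation2}, rewritten as $\sum_i i N_i = dk$) and that $\binom{z}{\lambda+1}$, extended to a function of a real variable $z$ via the polynomial $z(z-1)\cdots(z-\lambda)/(\lambda+1)!$, is convex on $[\lambda, \infty)$ and in fact the integer sequence $\binom{0}{\lambda+1}, \binom{1}{\lambda+1}, \binom{2}{\lambda+1},\ldots$ is convex (its second difference is $\binom{i-1}{\lambda-1}\geq 0$). A convex integer sequence summed over $v$ nonnegative integers with fixed total $dk = qv + r$ is minimized when the values are as equal as possible, i.e. when $r$ of the $r(x)$ equal $q+1$ and the remaining $v-r$ equal $q$. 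That minimum value is $r\binom{q+1}{\lambda+1} + (v-r)\binom{q}{\lambda+1}$, and using Pascal's identity $\binom{q+1}{\lambda+1} = \binom{q}{\lambda+1} + \binom{q}{\lambda}$ this equals $v\binom{q}{\lambda+1} + r\binom{q}{\lambda}$, exactly the desired quantity. Combining the two bounds yields the theorem.

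The only real obstacle is making the discrete minimization rigorous: I need the statement that among all assignments of nonnegative integers $a_1,\ldots,a_v$ with $\sum a_i = dk$, the sum $\sum_i \binom{a_i}{\lambda+1}$ is minimized by the balanced assignment. This follows from a standard smoothing/exchange argument: if two values $a_i \le a_j$ differ by at least $2$, replacing them by $a_i+1$ and $a_j-1$ does not change the sum and decreases $\sum \binom{a_i}{\lambda+1}$, because convexity of the sequence gives $\binom{a_i+1}{\lambda+1} + \binom{a_j-1}{\lambda+1} \le \binom{a_i}{\lambda+1} + \binom{a_j}{\lambda+1}$ whenever $a_i + 1 \le a_j$ (indeed this reduces to $\binom{a_j-1}{\lambda} \ge \binom{a_i}{\lambda}$, which holds since $a_j - 1 \ge a_i$). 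Iterating terminates at the balanced assignment. One should also note the degenerate case $d \le \lambda$, where $\binom{d}{\lambda+1}=0$ and the inequality $0 \ge 0$ holds trivially since then $q < \lambda+1$ forces the right-hand side to be $0$ as well (here the convention $\binom{a}{b}=0$ for $a<b$ is what makes the bookkeeping clean); I would dispose of this case first and then assume $d \geq \lambda+1$ for the main argument.
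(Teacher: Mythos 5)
Your proposal is correct and follows essentially the same route as the paper's proof: apply Lemma~\ref{General N-bound} to bound $\sum_i N_i\binom{i}{\lambda+1}$ from above, then use convexity of $i \mapsto \binom{i}{\lambda+1}$ to show the balanced frequency distribution ($r$ points of frequency $q+1$ and $v-r$ of frequency $q$) minimizes that sum, finishing with Pascal's identity. The only difference is that you spell out the smoothing argument and the degenerate case $d \leq \lambda$, which the paper leaves implicit.
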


\begin{proof}
Consider a $\PD_{\lambda}(d;v,k,t)$. For this packing design, by Lemma~\ref{General N-bound} we have
\begin{equation}\label{E:NBoundRestatement}
(t-1)\binom{d}{\lambda+1} \geq \sum_{i=1}^{n} N_i\binom{i}{\lambda+1}.
\end{equation}
Since $\sum_{i=1}^{d} N_i=v$, $\sum_{i=1}^{d} iN_i=dk$ and the function $\binom{i}{\lambda+1}$ is convex in $i$, we have that the right hand side of \eqref{E:NBoundRestatement} is minimized when $N_{q+1}=r$, $N_q=v-r$, and $N_i=0$ for all $i \in \{1,\ldots,n\} \setminus \{q,q+1\}$. Thus
\[
(t-1)\binom{d}{\lambda+1} \geq r\binom{q+1}{\lambda+1}+(v-r)\binom{q}{\lambda+1}=v\binom{q}{\lambda+1}+r\binom{q}{\lambda}
\]
where the equality follows by substituting $\binom{q+1}{\lambda+1}=\binom{q}{\lambda+1}+\binom{q}{\lambda}$ and simplifying.
\end{proof}

Note that, when $\lambda=1$, Lemma~\ref{General Upper Bound} reduces to the classical second Johnson bound for packing designs. We will show that in certain cases the bound of Lemma~\ref{General Upper Bound} is indeed met by constructing a $\PD_{\lambda}(n;v,k,t)$ for values of the parameters satisfying the hypothesis.  We first require the following lemma.

\begin{lemma} \label{nk/v Lemma}
For any $v \geq k \geq t \geq 1$ and $n \geq 0$, there is a $\PD_{\lc nk/v \rc}(n;v,k,t)$, $(V,\mathcal{B})$, such that $
r(x)
\in \left\{ \lf \frac{nk}{v} \rf, \lc \frac{nk}{v} \rc\right\}$ for all $x \in V$.
\end{lemma}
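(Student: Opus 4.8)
The plan is to give an explicit cyclic construction and verify it directly. Take $V=\mathbb{Z}_v$ and consider the periodic sequence $0,1,\dots,v-1,0,1,\dots$; cut its length-$nk$ initial segment into $n$ consecutive windows of length $k$, so that the $i$-th block ($i\in\{0,\dots,n-1\}$) is $B_i=\{ik,ik+1,\dots,ik+k-1\}$ with entries reduced modulo $v$. Since $k\le v$, any $k$ consecutive integers are pairwise incongruent modulo $v$, so each $B_i$ is genuinely a $k$-subset of $V$, and $(V,\mathcal{B})$ with $\mathcal{B}=\{B_0,\dots,B_{n-1}\}$ (a collection, possibly with repeats) is a candidate packing. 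Throughout write $nk=qv+r$ with $0\le r<v$, so $\lf nk/v\rf=q$, while $\lc nk/v\rc$ equals $q$ if $r=0$ and $q+1$ otherwise.

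First I would check the frequency condition. A point $x\in\{0,\dots,v-1\}$ lies in $B_i$ exactly when the window $[ik,ik+k-1]$ contains an integer congruent to $x$ modulo $v$, and because $k\le v$ it contains at most one such integer; hence $r(x)$ equals the number of $p\in\{0,\dots,nk-1\}$ with $p\equiv x\pmod v$, which is $q+1$ for $x\in\{0,\dots,r-1\}$ and $q$ for $x\in\{r,\dots,v-1\}$. In particular $r(x)\in\{q,q+1\}=\{\lf nk/v\rf,\lc nk/v\rc\}$ for every $x$, as required, and exactly $r$ points have frequency $q+1$.

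Finally I would verify that every $t$-subset lies in at most $\lc nk/v\rc$ blocks. Fix a $t$-subset $S$ and let $x$ be the number of blocks containing $S$. Counting incident pairs (point of $S$, block) in two ways gives $\sum_{s\in S}r(s)=\sum_{i=0}^{n-1}|B_i\cap S|$; each of the $x$ blocks containing $S$ contributes exactly $t=|S|$ to the right-hand side and the remaining summands are nonnegative, so $xt\le\sum_{s\in S}r(s)$. By the frequency computation each $r(s)\le\lc nk/v\rc$: if $r\ge1$ this yields $xt\le t\lc nk/v\rc$, hence $x\le\lc nk/v\rc$; and if $r=0$ then every $r(s)$ equals $q=\lc nk/v\rc$, so again $x\le q$. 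This finishes the proof. I do not expect a serious obstacle: the construction is immediate and the verifications are short counting arguments. The only points needing care are the case split according to whether $v\mid nk$ (equivalently $r=0$), since $\lc nk/v\rc$ behaves slightly differently there, and the extreme case $k=v$, where every block equals $V$, every $t$-subset lies in all $n$ blocks, and $\lc nk/v\rc=n$, so the bound is met with equality.
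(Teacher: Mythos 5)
Your proof is correct, but it takes a genuinely different route from the paper. The paper proves this lemma by induction on $n$ (after dispensing with the case $v=k$): it takes a balanced $\PD_{\lc nk/v\rc}(n;v,k,t)$ and adds one more block chosen to contain the points of minimum frequency (or to be contained in the set of such points), so that all frequencies stay within $1$ of each other; the packing property is then deduced, as in your argument, from the fact that a $t$-set cannot lie in more blocks than any single one of its points. Your explicit cyclic construction --- cutting the length-$nk$ prefix of the periodic sequence $0,1,\dots,v-1,0,1,\dots$ into $n$ windows of length $k$ --- reaches the same conclusion non-recursively, and your verification that each window contains at most one representative of each residue class (since $k\le v$) correctly pins down $r(x)$ as $q+1$ for $x<r$ and $q$ otherwise, where $nk=qv+r$. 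What your approach buys is a closed-form description of the packing and of exactly which points have the higher frequency; what the paper's inductive approach buys is a uniform treatment that extends the family one block at a time, which is the form in which the lemma is invoked later (one only needs existence with the stated frequency bounds, so either proof serves). Your final double-counting step $xt\le\sum_{s\in S}r(s)$ is more elaborate than necessary --- the one-line observation that a block containing $S$ contains any fixed $s\in S$ already gives $x\le r(s)\le\lc nk/v\rc$ --- but it is not wrong. Note also that your construction can produce repeated blocks (e.g.\ when $v\mid ik-jk$), which is permissible since the paper explicitly treats $\mathcal{B}$ as a collection and itself uses $n$ copies of $V$ when $v=k$.
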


\begin{proof}
If $v=k$, then the multiset consisting of $n$ copies of $V$ forms the blocks of the required packing design, so we may assume $v>k$.  Fix $v$ and $k$, and proceed by induction on $n$. The result is trivial for $n=0$ and $n=1$.  Otherwise, take a $\PD_{\lc nk/v \rc}(n;v,k,t)$, $(V,\mathcal{B})$, such that $r_{\cB}(x)
\in \left\{ \lf \frac{nk}{v} \rf, \lc \frac{nk}{v} \rc \right\}$ for all $x \in V$.  Let $\mathcal{B}' = \mathcal{B} \cup \{B\}$, where $B$ is a $k$-subset of $V$ such that $B \subseteq V_{\lf nk/v \rf}$ if $N_{\lf nk/v \rf} \geq k$, and $V_{\lf nk/v \rf} \subseteq B$ if $N_{\lf nk/v\rf} < k$.

Then we have that $|r_{\cB'}(x)-r_{\cB'}(y)| \leq 1$ for all $x,y \in V$. Thus, since $\sum_{x \in V}r_{\cB'}(x)=(n+1)k$, we have $r_{\cB'}(x) \in \{ \lfloor \frac{(n+1)k}{v} \rfloor, \lceil \frac{(n+1)k}{v} \rceil \}$. Since any point is in at most $\lceil \frac{(n+1)k}{v} \rceil$ blocks, the same is certainly true of any set of $t$ points, and hence $(V,\mathcal{B}')$ is a $\PD_{\lc (n+1)k/v \rc}(n+1;v,k,t)$.
\end{proof}

\begin{lemma} \label{General Construction}
Let $v \geq k \geq t \geq 2$, $\lambda \geq 1$ and $n \geq 1$ be integers.  If $k \geq (t-1)\binom{n-1}{\lambda}$
and  $\lambda v \geq nk-(t-1)\binom{n}{\lambda+1}$,
then there is a $\PD_{\lambda}(n;v,k,t)$, $(V,\mathcal{B})$, such that $r(x) 
\leq \lambda+1$ for all $x \in V$.
\end{lemma}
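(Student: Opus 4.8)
The strategy is to build a $\PD_\lambda(n;v,k,t)$ whose heaviest points each meet exactly $\lambda+1$ blocks, arranged so that the inequality of Lemma~\ref{General N-bound} is met with equality, and to pad out the blocks with points of frequency at most $\lambda$. First I would dispose of the case $n\le\lambda$: then any collection of $n$ $k$-subsets of $V$ is already a $\PD_\lambda(n;v,k,t)$, since every $t$-set lies in at most $n\le\lambda$ blocks and every point has frequency at most $n\le\lambda+1$. So assume $n\ge\lambda+1$, and index the $n$ prospective blocks by $\{1,\dots,n\}$. For each $(\lambda+1)$-subset $S$ of $\{1,\dots,n\}$ and each $j\in\{1,\dots,t-1\}$, introduce a point $p_{S,j}$, and decree that $p_{S,j}$ lies in block $i$ exactly when $i\in S$. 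Then each $p_{S,j}$ has frequency $\lambda+1$, there are $(t-1)\binom{n}{\lambda+1}$ such points, and block $i$ contains exactly $(t-1)\binom{n-1}{\lambda}$ of them, a number which is at most $k$ by hypothesis.

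Let $k'=k-(t-1)\binom{n-1}{\lambda}\ge 0$; each block still needs $k'$ further points, and I want these to have frequency at most $\lambda$ so that they cannot hurt the $t$-wise condition. If $k'\ge 1$, set $N=\left\lceil nk'/\lambda\right\rceil$ and apply Lemma~\ref{nk/v Lemma} with its parameters $(v,k,t,n)$ replaced by $(N,k',1,n)$; the hypotheses $N\ge k'\ge 1$ hold because $n\ge\lambda+1$. This produces $n$ $k'$-subsets of an $N$-set of fresh points such that each fresh point lies in at most $\left\lceil nk'/N\right\rceil\le\lambda$ of the subsets, and I adjoin the $i$-th subset to block $i$. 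Now block $i$ has size $(t-1)\binom{n-1}{\lambda}+k'=k$ (if $k'=0$, no fresh points are needed). Finally take $V$ to be all the points used together with enough isolated points to make $|V|=v$, and let $\mathcal{B}$ be the $n$ blocks so constructed.

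Two things then need checking. For the point count, the identity $n\binom{n-1}{\lambda}=(\lambda+1)\binom{n}{\lambda+1}$ gives
\[
(t-1)\binom{n}{\lambda+1}+\left\lceil \frac{nk'}{\lambda}\right\rceil
=\left\lceil\frac{nk-(t-1)\binom{n}{\lambda+1}}{\lambda}\right\rceil\le v ,
\]
where the last step uses $\lambda v\ge nk-(t-1)\binom{n}{\lambda+1}$ and the integrality of $v$; so the isolated points can indeed be added. For the defining property: every $p_{S,j}$ has frequency $\lambda+1$ and every other point has frequency at most $\lambda$, so $r(x)\le\lambda+1$ for all $x$. Given a $t$-set $T$ of points, if $T$ contains a point $x$ not of the form $p_{S,j}$ then $T$ lies in at most $r(x)\le\lambda$ blocks; and if every point of $T$ is some $p_{S,j}$, then — since at most $t-1$ of these points share any one index set — $T$ contains two points $p_{S,j}$ and $p_{S',j'}$ with $S\ne S'$, whence $T$ lies in at most $|S\cap S'|\le\lambda$ blocks. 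Thus $(V,\mathcal{B})$ is the required packing design. The one genuinely delicate step is the point count: the ceiling introduced by $N=\lceil nk'/\lambda\rceil$ must be exactly absorbed by the integrality of $v$, which is precisely why the hypothesis is stated in terms of $\lambda v$ rather than $v$; everything else is bookkeeping.
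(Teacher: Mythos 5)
Your construction is correct and essentially identical to the paper's: both introduce the $(t-1)\binom{n}{\lambda+1}$ high-frequency points indexed by $(\lambda+1)$-subsets of block indices times $\{1,\dots,t-1\}$, fill out the blocks via Lemma~\ref{nk/v Lemma} with points of frequency at most $\lambda$, and verify the $t$-wise condition by the same two-case argument ($T$ meets a low-frequency point, or $T$ contains $u_{S,j},u_{S',j'}$ with $S\ne S'$ and $|S\cap S'|\le\lambda$). The only immaterial difference is bookkeeping: the paper applies Lemma~\ref{nk/v Lemma} with the original $t$ on all $v-(t-1)\binom{n}{\lambda+1}$ remaining points and checks $nk'/|W|\le\lambda$ directly, whereas you apply it with $t=1$ on a minimal set of $\lceil nk'/\lambda\rceil$ points and absorb the ceiling using the integrality of $v$.
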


\begin{proof}
If $n \leq \lambda$, then taking $\mathcal{B}$ to be any collection of $n$ $k$-subsets of $V$ gives a packing satisfying the required conditions.  So we may assume that $n \geq \lambda+1$. Note that by combining our two hypotheses and simplifying we have $v \geq (t-1)\binom{n}{\lambda+1}$ with equality only if both hypotheses hold with equality.

Let $\mathcal{S}$ be the set of all $(\lambda+1)$-subsets of $\{1, \ldots, n\}$.  Let $U=\{u_{S,j} \mid S \in \mathcal{S}, j \in \{1,\ldots,t-1\}\}$ be a set of $(t-1)\binom{n}{\lambda+1}$ elements indexed by $\mathcal{S} \times \{1,\ldots,t-1\}$, and let $W$ be a set of $v-(t-1)\binom{n}{\lambda+1}$ elements that is disjoint from $U$. If $k = (t-1)\binom{n-1}{\lambda}$, let $W_1=\cdots=W_n=\emptyset$. Otherwise, we claim that there is a $\PD_{\lambda}\left(n;v-(t-1)\binom{n}{\lambda+1}, k-(t-1)\binom{n-1}{\lambda},t\right)$ on point set $W$ with block set $\mathcal{W}=\{W_1, W_2, \ldots, W_n\}$ such that $|\mathcal{W}_x| \leq \lambda$ for each $x \in W$. Such a packing design exists by Lemma~\ref{nk/v Lemma} because, using our hypothesis that $\lambda v \geq nk-(t-1)\binom{n}{\lambda+1}$,
\begin{eqnarray*}
\frac{n\left(k - (t-1)\binom{n-1}{\lambda}\right)}{|W|} &=& \frac{nk-(\lambda+1)(t-1)\binom{n}{\lambda+1}}{v-(t-1)\binom{n}{\lambda+1}} \\
&=& \frac{\left(nk - (t-1)\binom{n}{\lambda+1}\right) - \lambda (t-1)\binom{n}{\lambda+1}}{v-(t-1)\binom{n}{\lambda+1}} \\
&\leq& \frac{\lambda v - \lambda(t-1) \binom{n}{\lambda+1}}{v - (t-1)\binom{n}{\lambda+1}} \\
&=& \lambda.
\end{eqnarray*}
So the packing design $(W,\mathcal{W})$ does indeed exist.

Let $\mathcal{B} = \{B_1, \ldots, B_n\}$, where, for each $i \in \{1, \ldots, n\}$, $B_i$ is the $k$-set
\[
W_i \cup \{u_{S,j} \mid S \in \mathcal{S}, i \in S, j \in \{1,\ldots,t-1\}\}.
\]
Note that $|\mathcal{B}_x| = \lambda+1$ for all $x \in U$ and $|\mathcal{B}_x| \leq \lambda$ for all $x \in W$.  We claim $(U \cup W,\mathcal{B})$ is the required packing design.  To see this, let $T$ be a $t$-subset of $U \cup W$.  If $T \not\subseteq U$, then there is some $y \in T \cap W$ and we have $|\bigcap_{x \in T}\mathcal{B}_x| \leq |\mathcal{B}_y| \leq \lambda$.  Otherwise, $T \subseteq U$, and we must have $\{u_{S,j},u_{S',j'}\} \subseteq T$ for some $S,S' \in \mathcal{S}$ and $j,j' \in \{1,\ldots,t-1\}$ where $S \neq S'$.  So,
\[
\left|\bigcap_{x \in T}\mathcal{B}_x\right| \leq | \{B_i \mid i \in S \cap S'\}| = |S \cap S'| \leq \lambda. \qedhere
\]
\end{proof}

\begin{example}
Let $W_1,\ldots,W_4$, where $W_i=\{w_{i1},w_{i2}\}$, be the blocks of a $\PD(4;8,2)$ in which each point is in exactly one block. Applying the scheme described in the proof of Lemma~\ref{General Construction} to this packing design, we obtain a $\PD(4;14,5)$ whose blocks are
\[
\begin{array}{l}
\{w_{11}, w_{12}, u_{12}, u_{13}, u_{14} \} \\
\{w_{21}, w_{22}, u_{12}, u_{23}, u_{24} \} \\
\{w_{31}, w_{32}, u_{13}, u_{23}, u_{34} \} \\
\{w_{41}, w_{42}, u_{14}, u_{24}, u_{34} \}
\end{array}
\]
where we write $u_{xy}$ as a shorthand for $u_{\{x,y\},1}$.
\end{example}

By combining Lemmas~\ref{General Upper Bound} and~\ref{General Construction} we can prove the main  result of this section.

\begin{theorem} \label{PDN}
Let $v \geq k \geq t \geq 2$, $\lambda \geq 1$ and $n \geq 1$ be integers.  We have that $\PDN_{\lambda}(v,k,t)=n$ if
\[nk-(t-1)\binom{n}{\lambda+1} \leq \lambda v < (n+1)k- (t-1)\binom{n+1}{\lambda+1}.\]
\end{theorem}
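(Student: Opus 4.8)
The plan is to prove $\PDN_{\lambda}(v,k,t)\ge n$ and $\PDN_{\lambda}(v,k,t)\le n$ separately, the former from Lemma~\ref{General Construction} and the latter from Theorem~\ref{General Upper Bound}; the only real work is a small amount of arithmetic linking the hypothesis of Theorem~\ref{PDN} to the hypotheses of those two results. For the lower bound, I would first note that since $\lambda v$ lies in the half-open interval with endpoints $nk-(t-1)\binom{n}{\lambda+1}$ and $(n+1)k-(t-1)\binom{n+1}{\lambda+1}$, that interval is nonempty, so $nk-(t-1)\binom{n}{\lambda+1}<(n+1)k-(t-1)\binom{n+1}{\lambda+1}$, which after applying $\binom{n+1}{\lambda+1}=\binom{n}{\lambda+1}+\binom{n}{\lambda}$ simplifies to $k>(t-1)\binom{n}{\lambda}\ge(t-1)\binom{n-1}{\lambda}$. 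Hence the first hypothesis of Lemma~\ref{General Construction} holds, and its second hypothesis $\lambda v\ge nk-(t-1)\binom{n}{\lambda+1}$ is exactly the left half of our hypothesis. Lemma~\ref{General Construction} then produces a $\PD_{\lambda}(n;v,k,t)$, so $\PDN_{\lambda}(v,k,t)\ge n$.

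For the upper bound, suppose toward a contradiction that a $\PD_{\lambda}(n+1;v,k,t)$ exists, and let $q$ and $r$ be the integers with $(n+1)k=qv+r$ and $0\le r<v$. Theorem~\ref{General Upper Bound}, applied with $d=n+1$, gives $(t-1)\binom{n+1}{\lambda+1}\ge v\binom{q}{\lambda+1}+r\binom{q}{\lambda}$. I claim $v\binom{q}{\lambda+1}+r\binom{q}{\lambda}\ge(q-\lambda)v+r$; granting this, the right-hand side equals $(n+1)k-\lambda v$, so $\lambda v\ge(n+1)k-(t-1)\binom{n+1}{\lambda+1}$, contradicting the right half of the hypothesis. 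To prove the claim, rewrite it as $v\bigl(\binom{q}{\lambda+1}-(q-\lambda)\bigr)+r\bigl(\binom{q}{\lambda}-1\bigr)\ge0$ and split into two cases. If $q\ge\lambda$, then $\binom{q}{\lambda}\ge1$ and the elementary inequality $\binom{q}{\lambda+1}\ge q-\lambda$ (which holds for every integer $q\ge0$, since for $q\ge\lambda+1$ the sets $\{1,\dots,\lambda,i\}$ with $\lambda+1\le i\le q$ are $q-\lambda$ distinct $(\lambda+1)$-subsets of $\{1,\dots,q\}$) makes both bracketed terms nonnegative. If $q<\lambda$, then $\binom{q}{\lambda+1}=\binom{q}{\lambda}=0$, so the left-hand side equals $v(\lambda-q)-r\ge v-r>0$. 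In either case the claim holds, so $\PDN_{\lambda}(v,k,t)\le n$, and together with the lower bound this gives $\PDN_{\lambda}(v,k,t)=n$.

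The binomial-coefficient manipulations are routine, so the step requiring the most care is the case analysis in the upper bound, along with the easily overlooked point in the lower bound that the hypothesis of Theorem~\ref{PDN} by itself already forces $k\ge(t-1)\binom{n-1}{\lambda}$, which is what licenses the appeal to Lemma~\ref{General Construction}. As an alternative, one can avoid introducing $q$ and $r$ and derive the upper bound directly from Lemma~\ref{General N-bound}: for a $\PD_{\lambda}(n+1;v,k,t)$ we have $(t-1)\binom{n+1}{\lambda+1}\ge\sum_i\binom{i}{\lambda+1}N_i\ge\sum_i(i-\lambda)N_i=(n+1)k-\lambda\sum_iN_i\ge(n+1)k-\lambda v$, again using $\binom{i}{\lambda+1}\ge i-\lambda$ for all $i\ge0$ and $\sum_iN_i\le v$.
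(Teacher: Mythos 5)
Your proof is correct, and the lower-bound half is identical to the paper's: both derive $k>(t-1)\binom{n}{\lambda}\geq(t-1)\binom{n-1}{\lambda}$ from the nonemptiness of the interval and then invoke Lemma~\ref{General Construction}. The upper-bound half uses the same key result (Theorem~\ref{General Upper Bound}) but handles the arithmetic differently. The paper pins down $q$ and $r$ exactly: it shows $q=\lambda$ by proving $\lambda v<(n+1)k<(\lambda+1)v$, where the second inequality requires the extra step of substituting $(t-1)\binom{n}{\lambda+1}<\frac{n-\lambda}{\lambda+1}k$ into the left half of the hypothesis; with $q=\lambda$ and $r=(n+1)k-\lambda v$ the bound of Theorem~\ref{General Upper Bound} collapses to exactly the right half of the hypothesis. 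You instead prove the uniform inequality $v\binom{q}{\lambda+1}+r\binom{q}{\lambda}\geq(q-\lambda)v+r=(n+1)k-\lambda v$ valid for every $q$, which sidesteps the identification of $q$ entirely at the cost of a short case analysis (your cases and the inequality $\binom{q}{\lambda+1}\geq q-\lambda$ are all correct, given the paper's convention $\binom{a}{b}=0$ for $a<b$). Your closing alternative, which bypasses Theorem~\ref{General Upper Bound} and argues directly from Lemma~\ref{General N-bound} via $\binom{i}{\lambda+1}\geq i-\lambda$, is also valid and is arguably the cleanest route to the upper bound, though it forgoes the paper's point that the sharper convexity bound of Theorem~\ref{General Upper Bound} is attained with equality in this regime. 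Both of your variants buy robustness and brevity; the paper's version buys the explicit verification that the generalized second Johnson bound is tight here.
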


\begin{proof}
First note that $nk-(t-1)\binom{n}{\lambda+1} < (n+1)k- (t-1)\binom{n+1}{\lambda+1}$ implies
\begin{equation}\label{E:kBig}
k > (t-1)\binom{n}{\lambda} > (t-1) \binom{n-1}{\lambda}.
\end{equation}
Since \eqref{E:kBig} and $nk-(t-1)\binom{n}{\lambda+1} \leq \lambda v$ hold, there exists a $\PD_\lambda(n;v,k,t)$ by Lemma~\ref{General Construction}.

We will complete the proof by showing that no $\PD_\lambda(n+1;v,k,t)$ exists. By Lemma~\ref{General Upper Bound}, it suffices to show that
\begin{equation}\label{E:SJBspecial}
(t-1)\binom{n+1}{\lambda+1} < v\binom{q}{\lambda+1}+r\binom{q}{\lambda}
\end{equation}
where $q$ and $r$ are the integers such that $(n+1)k=qv+r$ and $0 \leq r < v$. To determine $q$ and $r$, note that $\lambda v < (n+1)k-\binom{n+1}{2}$ implies $\lambda v < (n+1)k$ and hence $q \geq \lambda$. On the other hand, \eqref{E:kBig} yields 
$(t-1)\binom{n}{\lambda+1}<\frac{n-\lambda}{\lambda+1}k$. Substituting this into $nk-(t-1)\binom{n}{\lambda+1} \leq \lambda v$ and simplifying produces $(n+1)k<(\lambda+1)v$. Hence $q<\lambda+1$. Thus $q=\lambda$ and $r=k(n+1)-\lambda v$, so \eqref{E:SJBspecial} becomes
\[
(t-1)\binom{n+1}{\lambda+1} < k(n+1)-\lambda v,
\]
which is equivalent to our hypothesis that $\lambda v < k(n+1)-(t-1)\binom{n+1}{\lambda+1}$.
\end{proof}

We now determine the values of $k$ for which Theorem~\ref{PDN} applies and the results it gives for these values. For given values of $v$, $t$ and $\lambda$, Theorem~\ref{PDN} shows that $\PDN_{\lambda}(v,k,t)=n$ where $n$ is the smallest nonnegative integer such that
\begin{equation}\label{E:lvSmallerCritereon}
\lambda v < (n+1)k-(t-1)\binom{n+1}{\lambda+1}
\end{equation}
if such an integer exists. 
Rearranging \eqref{E:lvSmallerCritereon} to solve for $k$, Theorem~\ref{PDN} gives the value of $\PDN_{\lambda}(v,k,t)$ when
\begin{equation}\label{E:lApplicabilityBound}
k >\frac{\lambda v}{n+1} + \frac{t-1}{\lambda+1}\binom{n}{\lambda}
\end{equation}
for some positive integer $n$. As $v$ becomes large, the value of $n$ that minimizes the right hand side of \eqref{E:lApplicabilityBound} approaches $((\lambda+1)!\frac{v}{t-1})^{1/(\lambda+1)}$ and the minimum value approaches $c_{t,\lambda} 
v^{\lambda/(\lambda+1)}$ where $c_{t,\lambda} 
=(\lambda+1)(\frac{t-1}{(\lambda + 1)!})^{1/(\lambda + 1)}$. So, as $v$ becomes large for fixed $t$ and $\lambda$, Theorem~\ref{PDN} determines the value of $\PDN_{\lambda}(v,k,t)$ if and only if $k \geq f_{t,\lambda}(v)$ for some function $f_{t,\lambda}$ asymptotic to $c_{t,\lambda} 
v^{\lambda/(\lambda+1)}$.
Using a similar analysis, it can be shown that Theorem 3.1 can be applied to produce a bound only when $k \geq g_{t,\lambda}$ for some function $g_{t,\lambda}$ asymptotic to $(t-1)^{1/(\lambda+1)} v^{\lambda/(\lambda+1)},$ which differs from the asymptotic value of $f_{t,\lambda}$ only by a constant factor. Note that $f_{2,1} = \sqrt{2v}$ and $g_{2,1} = \sqrt{v}$.

Let $c>0$ and $0 \leq \alpha \leq 1$ be constants, and consider a regime in which $t$ and $\lambda$ are fixed and $k \sim cv^\alpha$ as $v$ becomes large. Assume $k \geq f_{t,\lambda}(v)$ so that Theorem~\ref{PDN} applies. By considering the dominant terms of \eqref{E:lvSmallerCritereon} in this regime we can deduce the following.
\begin{itemize}
    \item
If $\alpha = 1$, then $\PDN_{\lambda}(v,k,t)\sim \lfloor \frac{\lambda}{c} \rfloor$.
    \item
If $\frac{\lambda}{\lambda+1} < \alpha < 1$, then $\PDN_{\lambda}(v,k,t)\sim \frac{\lambda}{c}v^{1-\alpha}$.
    \item
If $\alpha = \frac{\lambda}{\lambda+1}$, then $\PDN_{\lambda}(v,k,t)\sim dv^{1-\alpha}$ where $x=d$ is the least positive real solution to the equation $\lambda=cx-\frac{t-1}{(\lambda+1)!}x^{\lambda+1}$. (That such a solution exists is guaranteed by our assumption that $k \geq f_{t,\lambda}(v)$.)
\end{itemize}

Theorem~\ref{PDN} captures most, but not all, of the situations in which Theorem~\ref{General Upper Bound} and Lemma~\ref{General Construction} combine to determine a packing number. For fixed positive integers $k$, $t$ and $\lambda$ with $k \geq t$, and nonnegative integers $n$, Theorem~\ref{PDN} requires that
\[
kn-(t-1)\binom{n}{\lambda+1} < k(n+1)-(t-1)\binom{n+1}{\lambda+1},
\]
which holds if and only if $(t-1)\binom{n}{\lambda} < k$.
We define $\ell$ to be the least integer such that $(t-1)\binom{\ell}{\lambda} > k$, and note that
Theorem~\ref{PDN} will determine values of $\PDN_{\lambda}(v,k,t)$ 
when $\lambda v < k\ell-(t-1)\binom{\ell}{\lambda+1}$ and these values will be less than $\ell$.
Since $k < (t-1)\binom{\ell}{\lambda}$, Lemma~\ref{General Construction} cannot produce packing designs with more than $\ell$ blocks.  However, it can produce packing designs with exactly $\ell$ blocks when $\lambda v \geq \ell k-(t-1)\binom{\ell}{\lambda+1}$, and in some cases these will achieve the bound of Theorem~\ref{General Upper Bound}. We encapsulate this in the following theorem.

\begin{theorem}\label{T:nBig}
Let $k \geq t \geq 2$ and $\lambda \geq 1$ be integers and let $\ell$ be the least integer such that $(t-1)\binom{\ell}{\lambda} > k$. Then $\PDN_{\lambda}(v,k,t)=\ell$ if
\begin{equation}\label{E:nBigConds}
\ell k-(t-1)\binom{\ell}{\lambda+1} \leq \lambda v < \frac{\lambda+1}{\lambda+2}(\ell+1)k- \frac{t-1}{\lambda+2}\binom{\ell+1}{\lambda+1}.
\end{equation}
\end{theorem}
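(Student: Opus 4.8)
The plan is to mirror the proof of Theorem~\ref{PDN}: use Lemma~\ref{General Construction} to build a $\PD_\lambda(\ell;v,k,t)$ under the left inequality of \eqref{E:nBigConds}, and use Theorem~\ref{General Upper Bound} to rule out a $\PD_\lambda(\ell+1;v,k,t)$ under the right inequality. For the construction, I would first check that the hypotheses of Lemma~\ref{General Construction} hold with $n=\ell$: the condition $k \geq (t-1)\binom{\ell-1}{\lambda}$ follows from the minimality of $\ell$ (since $\ell$ is \emph{least} with $(t-1)\binom{\ell}{\lambda}>k$, we have $(t-1)\binom{\ell-1}{\lambda} \leq k$), and the condition $\lambda v \geq \ell k - (t-1)\binom{\ell}{\lambda+1}$ is exactly the left inequality of \eqref{E:nBigConds}. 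So a $\PD_\lambda(\ell;v,k,t)$ exists, giving $\PDN_\lambda(v,k,t) \geq \ell$.

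For the upper bound, I would suppose a $\PD_\lambda(\ell+1;v,k,t)$ exists and derive a contradiction from Theorem~\ref{General Upper Bound}, which says $(t-1)\binom{\ell+1}{\lambda+1} \geq v\binom{q}{\lambda+1} + r\binom{q}{\lambda}$ where $(\ell+1)k = qv+r$, $0 \leq r < v$. The key step is pinning down $q$. Unlike in Theorem~\ref{PDN}, here we no longer have a clean upper bound on $\lambda v$ forcing $q < \lambda+1$; the right inequality of \eqref{E:nBigConds} is weaker than $\lambda v < (\ell+1)k - (t-1)\binom{\ell+1}{\lambda+1}$. So I expect $q$ can equal $\lambda$ \emph{or} $\lambda+1$, and the argument splits into these two cases. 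In the case $q = \lambda$ (i.e.\ $\lambda v \leq (\ell+1)k < (\lambda+1)v$), we get $r = (\ell+1)k - \lambda v$ and the bound reads $(t-1)\binom{\ell+1}{\lambda+1} \geq (\ell+1)k - \lambda v$; combined with $k < (t-1)\binom{\ell}{\lambda}$ (which gives $(t-1)\binom{\ell+1}{\lambda+1} < \tfrac{\ell+1-\lambda}{\lambda+1}k$ after using the hockey-stick-type identity $\binom{\ell+1}{\lambda+1} = \tfrac{\ell+1}{\lambda+1}\binom{\ell}{\lambda}$), this should contradict the right inequality of \eqref{E:nBigConds} after rearrangement. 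In the case $q = \lambda+1$ (i.e.\ $(\lambda+1)v \leq (\ell+1)k < (\lambda+2)v$), we get $r = (\ell+1)k - (\lambda+1)v$ and the bound becomes $(t-1)\binom{\ell+1}{\lambda+1} \geq v\binom{\lambda+1}{\lambda+1} + ((\ell+1)k - (\lambda+1)v)\binom{\lambda+1}{\lambda} = v + (\lambda+1)((\ell+1)k - (\lambda+1)v) = (\lambda+1)(\ell+1)k - (\lambda^2+2\lambda)v$; rearranging gives $\lambda v \geq \tfrac{\lambda+1}{\lambda+2}(\ell+1)k - \tfrac{t-1}{\lambda+2}\binom{\ell+1}{\lambda+1}$, directly contradicting the right inequality of \eqref{E:nBigConds}. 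I should also confirm that $q \geq \lambda$ always holds (the left inequality of \eqref{E:nBigConds} together with $k < (t-1)\binom{\ell}{\lambda}$ should force $(\ell+1)k$ large enough relative to $v$, or more simply $\lambda v < (\ell+1)k$ follows since the right bound in \eqref{E:nBigConds} is below $(\ell+1)k$), and that $q \leq \lambda+1$, i.e.\ $(\ell+1)k < (\lambda+2)v$ — this last point needs checking and may require the left inequality of \eqref{E:nBigConds} plus $k < (t-1)\binom{\ell}{\lambda}$.

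The main obstacle is the case analysis on $q$ and, in particular, verifying that $q$ cannot exceed $\lambda+1$; in Theorem~\ref{PDN} the two-sided hypothesis made this automatic, but here only a one-sided bound is assumed, so I need to extract the bound $(\ell+1)k < (\lambda+2)v$ from the left inequality of \eqref{E:nBigConds} and the defining inequality $k < (t-1)\binom{\ell}{\lambda}$. Concretely, from $\lambda v \geq \ell k - (t-1)\binom{\ell}{\lambda+1}$ and $(t-1)\binom{\ell}{\lambda+1} < \tfrac{\ell-\lambda}{\lambda+1}k$ one gets $\lambda v > \tfrac{(\lambda+1)\ell - \ell + \lambda}{\lambda+1}k = \tfrac{\lambda\ell + \lambda}{\lambda+1}k = \tfrac{\lambda(\ell+1)}{\lambda+1}k$, whence $(\lambda+1)v > (\ell+1)k$, so in fact $q \leq \lambda$ after all — meaning only the case $q=\lambda$ occurs and the $q=\lambda+1$ branch above is vacuous. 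This simplification is worth confirming carefully, as it collapses the proof to essentially the same computation as in Theorem~\ref{PDN} but with $\ell$ in place of $n$ and the weaker right-hand bound; once $q=\lambda$ is established the rest is the routine rearrangement sketched above, with the right inequality of \eqref{E:nBigConds} being precisely what is needed to close the contradiction.
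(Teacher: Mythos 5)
Your overall plan (Lemma~\ref{General Construction} for the lower bound, Theorem~\ref{General Upper Bound} with a determination of $q$ for the upper bound) is the right one, and your treatment of the lower bound and of the $q=\lambda+1$ computation are both correct and match the paper. However, your final ``simplification'' contains a reversed inequality that derails the argument. From the defining property $(t-1)\binom{\ell}{\lambda}>k$ and the identity $\binom{\ell}{\lambda+1}=\tfrac{\ell-\lambda}{\lambda+1}\binom{\ell}{\lambda}$ one gets $(t-1)\binom{\ell}{\lambda+1}>\tfrac{\ell-\lambda}{\lambda+1}k$, not $<$ as you claim; the same reversal infects your parenthetical claim in the $q=\lambda$ branch that $(t-1)\binom{\ell+1}{\lambda+1}<\tfrac{\ell+1-\lambda}{\lambda+1}k$. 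Consequently your conclusion that $(\lambda+1)v>(\ell+1)k$, hence $q\leq\lambda$, is false: the truth is the opposite. Substituting $(t-1)\binom{\ell+1}{\lambda+1}>\tfrac{\ell+1}{\lambda+1}k$ (from $(t-1)\binom{\ell}{\lambda}>k$) into the \emph{right} inequality of \eqref{E:nBigConds} yields $(\lambda+1)v<(\ell+1)k$, so $q\geq\lambda+1$. The branch you discard as vacuous is the only one that occurs, and the $q=\lambda$ branch -- whose contradiction you could not in fact close, since the right inequality of \eqref{E:nBigConds} is strictly weaker than $\lambda v<(\ell+1)k-(t-1)\binom{\ell+1}{\lambda+1}$ -- never arises.

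To repair the proof, pin down $q$ exactly as follows. For $q\leq\lambda+1$, use the minimality of $\ell$, i.e.\ $(t-1)\binom{\ell-1}{\lambda}\leq k$, together with $\binom{\ell}{\lambda+1}=\tfrac{\ell}{\lambda+1}\binom{\ell-1}{\lambda}$ to get $(t-1)\binom{\ell}{\lambda+1}\leq\tfrac{\ell}{\lambda+1}k$; substituting this into the left inequality of \eqref{E:nBigConds} gives $\lambda v\geq \tfrac{\lambda\ell}{\lambda+1}k$, i.e.\ $\ell k\leq(\lambda+1)v$, and then $(\ell+1)k<(\lambda+2)v$ since $v>k$. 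Combined with $(\lambda+1)v<(\ell+1)k$ from above, this forces $q=\lambda+1$ and $r=(\ell+1)k-(\lambda+1)v$, after which your own computation
\[
(t-1)\binom{\ell+1}{\lambda+1}\;\geq\;v+(\lambda+1)\bigl((\ell+1)k-(\lambda+1)v\bigr)
\]
rearranges to $\lambda v\geq\tfrac{\lambda+1}{\lambda+2}(\ell+1)k-\tfrac{t-1}{\lambda+2}\binom{\ell+1}{\lambda+1}$, contradicting the right inequality of \eqref{E:nBigConds}. This is precisely the paper's argument.
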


\begin{proof}
Note that
\begin{equation}\label{E:n0Cons}
(t-1)\binom{\ell-1}{\lambda} \leq k
\end{equation}
by the definition of $\ell$. In view of \eqref{E:n0Cons} and the first inequality of \eqref{E:nBigConds}, Lemma~\ref{General Construction} guarantees the existence of a $\PDN_{\lambda}(\ell;v,k,t)$. We will complete the proof by showing that Theorem~\ref{General Upper Bound} can be applied with $q=\lambda+1$ to show that no $\PDN_{\lambda}(\ell+1;v,k,t)$ exists.

From \eqref{E:n0Cons} we have $(t-1)\binom{\ell}{\lambda+1} \leq \frac{1}{\lambda+1}\ell k$ and substituting this into the first inequality of \eqref{E:nBigConds} and simplifying yields $\ell k \leq (\lambda+1)v$. Thus $(\ell+1)k < (\lambda+2)v$, since $v > k$ follows from \eqref{E:n0Cons} and the first inequality of \eqref{E:nBigConds}. Now $(t-1)\binom{\ell}{\lambda} > k$ implies $(t-1)\binom{\ell+1}{\lambda+1} > \frac{1}{\lambda+1}(\ell+1)k$ and substituting this into the second inequality of \eqref{E:nBigConds} and simplifying yields $(\ell+1)k > (\lambda+1)v$. So, applying Theorem~\ref{General Upper Bound} with $q=\lambda+1$ and $r=(\ell+1)k-(\lambda+1)v$, shows that no $\PDN_{\lambda}(\ell+1;v,k,t)$ exists provided
\[
(t-1)\binom{\ell+1}{\lambda+1} < v+(\lambda+1)\bigl((\ell+1)k-(\lambda+1)v\bigr).
\]
Routine manipulation shows this is equivalent to the second inequality of \eqref{E:nBigConds}.
\end{proof}

A tedious simplification, using first $\binom{\ell+1}{\lambda+1}=\binom{\ell}{\lambda+1}+\binom{\ell}{\lambda}$ and then $(\lambda+1)\binom{\ell}{\lambda+1}=(\ell-\lambda)\binom{\ell}{\lambda}$, shows that the leftmost expression in \eqref{E:nBigConds} is less than the rightmost precisely because $(t-1)\binom{\ell}{\lambda} > k$. So, in particular, we may apply Theorem~\ref{T:nBig} when we have equality in the first inequality in \eqref{E:nBigConds}. Using this, we immediately obtain a generalization of a result in \cite{GashkovEkbergTaub}.

\begin{corollary}\label{C:GETGen}
Let $t \geq 2$, $\lambda \geq 1$ and $n \geq \lambda+1$ be integers such that $(t-1)\tbinom{n}{\lambda+1} \equiv 0 \pmod{\lambda}$.  We have \[\PDN_{\lambda}\left(\tfrac{t-1}{\lambda}\tbinom{n}{\lambda+1},(t-1)\tbinom{n-1}{\lambda},t\right)=n.\]
\end{corollary}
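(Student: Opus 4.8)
The plan is to deduce the corollary directly from Theorem~\ref{T:nBig}, with the block size taken to be $k=(t-1)\binom{n-1}{\lambda}$ and the point count taken to be the value $v$ named in the statement; the divisibility hypothesis $(t-1)\binom{n}{\lambda+1}\equiv 0 \pmod{\lambda}$ is precisely what guarantees this $v$ is a positive integer. For these parameters the integer $\ell$ appearing in Theorem~\ref{T:nBig} will equal $n$, and the two inequalities \eqref{E:nBigConds} of that theorem will be satisfied, so that $\PDN_{\lambda}(v,k,t)=\ell=n$ follows immediately.

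First I would pin down $\ell$. By definition, $\ell$ is the least integer with $(t-1)\binom{\ell}{\lambda}>k=(t-1)\binom{n-1}{\lambda}$, i.e.\ with $\binom{\ell}{\lambda}>\binom{n-1}{\lambda}$. Since $n\geq\lambda+1$ gives $n-1\geq\lambda$, the values $\binom{n-1}{\lambda},\binom{n}{\lambda},\binom{n+1}{\lambda},\dots$ are strictly increasing, so $\ell=n$. Next I would verify that $\lambda v$ lies in the half-open interval delimited by \eqref{E:nBigConds} when $\ell=n$. The principal tool here is the standard identity $(\lambda+1)\binom{n}{\lambda+1}=n\binom{n-1}{\lambda}$, together with Pascal's rule $\binom{n+1}{\lambda+1}=\binom{n}{\lambda+1}+\binom{n}{\lambda}$: using these one rewrites the left endpoint $\ell k-(t-1)\binom{\ell}{\lambda+1}$ and the right endpoint $\tfrac{\lambda+1}{\lambda+2}(\ell+1)k-\tfrac{t-1}{\lambda+2}\binom{\ell+1}{\lambda+1}$ of \eqref{E:nBigConds} as explicit combinations of $\binom{n}{\lambda+1}$ and $\binom{n}{\lambda}$, and then checks that $\lambda v$ satisfies the first inequality (indeed, the parameters are arranged so that this holds with equality, which is the situation highlighted in the remark preceding the corollary) and lies strictly below the right endpoint. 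That the left endpoint is genuinely below the right endpoint — so the interval is nonempty and actually contains our $\lambda v$ — I would simply quote from that same remark, which attributes the strictness to the inequality $(t-1)\binom{\ell}{\lambda}>k$; here this holds because $\binom{n}{\lambda}>\binom{n-1}{\lambda}$.

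The step I expect to require the most care is the endpoint bookkeeping for \eqref{E:nBigConds}: one must track the numerical coefficients while moving between $\binom{n-1}{\lambda}$, $\binom{n}{\lambda+1}$ and $\binom{n}{\lambda}$, so as to confirm that the prescribed $v$ really does place $\lambda v$ at the left end of the admissible interval. By contrast, identifying $\ell=n$ and invoking the nonemptiness of the interval are routine, so once the arithmetic in the middle step is settled the corollary drops out of Theorem~\ref{T:nBig} with no further work.
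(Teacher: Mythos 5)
Your proposal is correct and follows exactly the paper's route: the paper derives the corollary by applying Theorem~\ref{T:nBig} with $k=(t-1)\binom{n-1}{\lambda}$, noting $\ell=n$ and that $\lambda v=\ell k-(t-1)\binom{\ell}{\lambda+1}$ realizes the left endpoint of \eqref{E:nBigConds} with equality, with nonemptiness of the interval supplied by the remark preceding the corollary. Your computation via $(\lambda+1)\binom{n}{\lambda+1}=n\binom{n-1}{\lambda}$ is precisely the verification the paper leaves implicit.
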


Specializing Corollary~\ref{C:GETGen} to the case $(t,\lambda)=(2,1)$ yields \cite[Theorem~2]{GashkovEkbergTaub}.

\section{Directed Packings} \label{sec:DPN}
In this section, we extend the results of Theorem~\ref{PDN} to directed packings $\DPD(v,k)$.  In order to do this, we first show in Lemma~\ref{lemma:directing} that the blocks of any $\PD_{2}(v,k)$ whose points all have frequency at most $3$ may be ordered in such a way to produce a $\DPD(v,k)$.

We note that Lemma~\ref{lemma:directing} applies even for packings whose blocks do not all have the same size.
For this reason, we introduce notation for packing designs with $t=2$ and blocks of varying sizes.  If $V$ is a set of $v$ points and $\cB$ is a collection of subsets of $V$ (of any sizes including 0) 
satisfying the property that any pair of points appears in at most $\lambda$ elements of $\cB$, then we say $(V,\cB)$ is a $\PD_{\lambda}(v)$.  A $\PD_{\lambda}(v)$ which has $n$ blocks is denoted by $\PD_{\lambda}(n;v)$. 
Similarly, we define a $\DPD(v)$ to be a directed packing design on $v$ points with $(t,\lambda)=(2,1)$ and blocks of any sizes, and a $\DPD(n;v)$ to be such a design with $n$ blocks. For technical reasons below, it will be important that we allow a $\PD_2(v)$ or a $\DPD(v)$ to contain empty blocks and repeated blocks.

For sequences $A=(a_1,\ldots,a_s)$ and $B=(b_1,\ldots,b_t)$ we use $A+B$ to denote the concatenated sequence $(a_1,\ldots,a_s,b_1,\ldots,b_t)$. For a sequence $A$ and a subset $S$ of the entries of $A$, we use $A[S]$ to denote the subsequence of $A$ that contains only the elements of $S$.

\begin{lemma} \label{lemma:directing}
Let $(V,\mathcal{B})$ be a $\PD_2(n;v)$
such that $r_{\mathcal{B}}(x)
\leq 3$ for each $x \in V$. There is a $\DPD(n;v)$,
$(V,\{T_B\}_{B \in \mathcal{B}})$, such that $T_B$ is a permutation of $B$ for each $B \in \mathcal{B}$.
\end{lemma}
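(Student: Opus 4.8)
The plan is to set up an auxiliary graph that encodes the conflicts between blocks, and then to orient the blocks by a careful case analysis on the structure of that graph. Form a multigraph $G$ on vertex set $\mathcal{B}$ in which, for each pair of points $\{x,y\}$ that lies in two blocks $B, B'$, we place an edge between $B$ and $B'$ labelled by $\{x,y\}$. Since each point has frequency at most $3$, a point $x$ with $r_{\mathcal B}(x)=3$ contributes a triangle's worth of potential edges (one for each pair of the three blocks through $x$), a point with frequency $2$ contributes a single edge, and points of frequency $\le 1$ contribute nothing. The key observation is that orienting the blocks so as to obtain a $\DPD(n;v)$ is exactly the problem of choosing, for each block $B$, a linear order $T_B$ of its elements so that no labelled pair is ``realised in the same direction'' by both of its blocks: if $\{x,y\}$ labels an edge $BB'$, we need $T_B$ and $T_{B'}$ to induce opposite orders on $\{x,y\}$.

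First I would reduce to the connected case: it suffices to orient each connected component of $G$ independently, so assume $G$ is connected. Next I would bound the structure of $G$. Because each point of frequency $3$ gives a triangle and each point of frequency $2$ gives a single edge, and because two distinct points can share at most one common pair of blocks only in limited ways (two points $x,y$ both of frequency $2$ lying in the same two blocks would force the pair $\{x,y\}$ to appear twice — which is allowed since $\lambda = 2$ — but a third block through either would create further structure), the components of $G$ are quite restricted: each is built from triangles (from frequency-$3$ points) and single edges (from frequency-$2$ points) glued along vertices. I would argue that a connected component is either a single edge, a single triangle, or is obtained by gluing such pieces in a treelike fashion at cut vertices, possibly with a bounded number of parallel edges between a fixed pair of blocks. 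The crucial point is that no block can be incident with too many distinct labelled pairs pointing to the same neighbour in a way that over-constrains its internal order.

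The heart of the argument, and the step I expect to be the main obstacle, is showing that the local constraints are always simultaneously satisfiable. For a single edge $BB'$ with label $\{x,y\}$, pick any order of $B$ and then any order of $B'$ that reverses $\{x,y\}$ — trivially possible. For a triangle on blocks $B_1, B_2, B_3$ arising from a single frequency-$3$ point $x$ (so the three labels are $\{x,y_1\}, \{x,y_2\}, \{x,y_3\}$ with the $y_i$ possibly coinciding or not), I need to choose orders of $B_1, B_2, B_3$ realising a ``cyclic'' pattern on the pairs through $x$; this is where one must check that an odd cycle does not obstruct a proper $2$-coloring-like assignment, and the resolution is that each block has genuine freedom because the relevant pairs all share the common point $x$, so placing $x$ first in $B_1$, last in $B_2$, and in the middle of $B_3$ (adjusting $y_3$) resolves all three constraints at once. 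I would then propagate these local choices through the treelike gluing: process the component in a rooted order, orienting each new piece consistently with the already-oriented cut vertex it attaches at, using the freedom in the remaining (not-yet-constrained) coordinates of each block. The argument must confirm that each block receives at most a constant number of independent constraints (at most three, one per occurrence of the block in a conflicting pair, since each pair uses two specified positions and a block has $r_{\mathcal B}(x)$-many... — rather, each block's order is constrained only through the at-most-few labelled pairs on its incident edges), and that these never conflict because a treelike component has no cycles forcing contradictory orientations except the triangles, which we handled directly. Assembling the per-component orientations yields the desired $\DPD(n;v)$.
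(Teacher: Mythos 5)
Your reduction of the task to ``for each pair $\{x,y\}$ lying in two blocks, those two blocks must induce opposite orders on $\{x,y\}$'' is the right starting point, but the proposal has a genuine gap at exactly the step you flag as the main obstacle. First, the block-conflict multigraph does not capture the real constraints. When two blocks $B,B'$ share a set $S$ of two or more points, every pair inside $S$ is doubly covered, so $T_B$ and $T_{B'}$ are forced to induce exactly reversed orders on the whole of $S$; the constraint is a reversal of an entire subsequence, not a single labelled edge, and your structural claim that components decompose into triangles and single edges glued treelike at cut vertices is not established (and a frequency-$3$ point does not merely contribute ``three labels $\{x,y_1\},\{x,y_2\},\{x,y_3\}$'' --- it contributes one constrained pair $\{x,u\}$ for every $u$ in any of the three pairwise intersections of its blocks, on top of the internal reversal constraints among those intersections).

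Second, your resolution of the three-block case fails. Suppose $a$ lies in $B_1,B_2,B_3$ and set $X=(B_1\cap B_2)\setminus\{a\}$, $Y=(B_1\cap B_3)\setminus\{a\}$, $Z=(B_2\cap B_3)\setminus\{a\}$ (pairwise disjoint because $\lambda=2$). The position of $a$ in $T_1$ splits $X$ and $Y$ each into a prefix and a suffix; the reversal constraints then force $a$ into one specific gap of $X$ within $T_2$ and one specific gap of $Y$ within $T_3$, and in addition $a$ must split $Z$ into complementary halves in $T_2$ and $T_3$. Placing $a$ first in $B_1$, last in $B_2$ and ``in the middle'' of $B_3$ does not reconcile these: for instance, if in $T_2$ every element of $Z$ precedes the last element of $X$, then putting $a$ after all of $X$ forces $a$ after all of $Z$ in $T_2$, hence before all of $Z$ in $T_3$, which can be incompatible with $a$ also having to follow all of $Y$ in $T_3$. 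Proving that some consistent triple of insertion points always exists is the heart of the lemma; the paper does this by induction on $v$ (delete a point $a$, direct the smaller packing, then reinsert $a$), using an intermediate-value-style argument over the interleavings --- the quantities $j(i),k(i)$ and the interval sets $R(i),S(i)$, with the key claim $|R(\ell)\cap S(\ell)|\ge 2$ at the first index $\ell$ where $\min(R(\ell))<\max(S(\ell))$ --- to exhibit compatible gaps. Your proposal correctly identifies this obstacle but does not supply an argument that overcomes it, and the subsequent ``propagation through a treelike component'' rests on the unproved structural claim.
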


\begin{proof}
We proceed by induction on $v$.  The result is trivial when $v=1$.  Now suppose $(V,\mathcal{B})$ is a $\PD_2(v)$ with $v \geq 2$ and $|\mathcal{B}_x| \leq 3$ for each $x \in V$.  Let $a \in V$, and let $\mathcal{B}=\{B_1, \ldots, B_n\}$ and $\mathcal{B}_a = \{B_1, \ldots, B_{t}\}$, where $t = r_{\cB}(a) \in \{0,1,2,3\}$.
Let $B'_i=B_i \setminus \{a\}$ for each $i \in \{1, \ldots, n\}$.
By induction, there is a $\DPD(v-1)$, $(V\setminus\{a\}, \{T_1', \ldots, T_n'\})$, such that $T_i'$ is a permutation of $B'_i$ for each $i \in \{1, \ldots, n\}$.

We will create a $\DPD(v)$, $(V,\mathcal{T})$, with the required properties where $\mathcal{T}=\{T_1, \ldots, T_n\}$, $T_i=T_i'$ for every $i \in \{t+1, \ldots, n\}$ and for $i \in \{1, \ldots, t\}$, $T_i$ is a permutation of $B_i = B_i' \cup \{a\}$ such that $T_i'$ is a subsequence of $T_i$.  In other words, for $i \in \{1, \ldots, t\}$, $T_i$ is obtained by inserting $a$ into $T_i'$.  Note that to confirm that $(V,\mathcal{T})$ is a $\DPD(v)$, it will suffice to confirm that, for each point $u \in V \setminus\{a\}$ such that $|\mathcal{B}_u \cap \mathcal{B}_a| = 2$, we have that the sequence $(u,a)$ is a subsequence of one of the blocks in $\mathcal{T}_u \cap \mathcal{T}_a$ and $(a,u)$ is a subsequence of the other.

We specify how $a$ should be inserted into $T_1, \ldots, T_t$ according to the value of $t$.  Note that if $t=0$ there is nothing to do, and if $t=1$, then we can take $T_1 = (a) + T_1'$.  If $t=2$, we take $T_1 = (a)+T_1'$ and $T_2 = T_2'+(a)$.  In each of these cases, it is easy to see that $\mathcal{T}$ forms the blocks of a $\DPD(v)$ with the required properties.

We now consider the case that $t=3$.  Let $X=B_1' \cap B_2'$, $Y=B_1' \cap B_3'$ and $Z=B_2' \cap B_3'$.  Note that $X$, $Y$ and $Z$ are pairwise disjoint because $\mathcal{B}_a = \{B_1,B_2,B_3\}$ and $(V,\mathcal{B})$ is a $\PD_2(v)$. Let $p=|X|$, $q=|Y|$ and $r=|Z|$ (noting that some or all of $p$, $q$ and $r$ may be zero) and say $X=\{x_1, \ldots, x_p\}$, $Y=\{y_1, \ldots, y_p\}$ and $Z=\{z_1, \ldots, z_p\}$.  Suppose without loss of generality that:
\begin{itemize}
\item $(x_1, \ldots, x_p)$ and $(y_1, \ldots, y_q)$ are subsequences of $T_1'$,
\item $(x_p, \ldots, x_1)$ and $(z_1, \ldots, z_r)$ are subsequences of $T_2'$, and
\item $(y_q, \ldots, y_1)$ and $(z_r, \ldots, z_1)$ are subsequences of $T_3'$.
\end{itemize}

Below, it will be convenient to use $x_0, x_{p+1}, y_0, y_{q+1}, z_0, z_{r+1}$ as placeholders representing the beginning or end of certain sequences.  For $i \in \{0, \ldots, p+q\}$, define $j(i)$, $k(i)$, $R(i)$ and $S(i)$ as follows.

\begin{itemize}
\item Let $j(i)$ be the greatest element $j$ of $\{1, \ldots, p\}$ such that $x_j$ is one of the first $i$ entries of $T_1'[X \cup Y]$ if such an element exists; otherwise, let $j(i)=0$.

\item Let $k(i)$ be the greatest element $k$ of $\{1, \ldots, q\}$ such that $y_k$ is one of the first $i$ entries of $T_1'[X \cup Y]$ if such an element exists; otherwise, let $k(i)=0$.
\item
Let $R(i)$ be the set of consecutive integers in $\{0, \ldots, r+1\}$ such that, in $(z_0,x_{p+1})+T_2'[X \cup Z] + (x_0,z_{r+1})$, $\min(R(i))$ is the largest index of an element of $Z$ that occurs before $x_{j(i)+1}$ and $\max(R(i))$ is the smallest index of an element of $Z$ that occurs after $x_{j(i)}$.  So $R(i)$
contains the set of all indices of elements of $Z$ that lie between $x_{j(i)}$ and $x_{j(i)+1}$ in this block, as well as $\min(R(i))$ and $\max(R(i))$.

\item
Let $S(i)$ be the set of consecutive integers in $\{0, \ldots, r+1\}$ such that, in $(z_{r+1},y_{q+1})+T_3'[Y \cup Z] + (y_0,z_{0})$, $\max(S(i))$ is the smallest index of an element of $Z$ that occurs before $y_{k(i)+1}$ and $\min(S(i))$ is the largest index of an element of $Z$ that occurs after $y_{k(i)}$.  So $S(i)$
contains the set of all indices of elements of $Z$ that lie between $y_{k(i)}$ and $y_{k(i)+1}$ in this block, as well as $\min(S(i))$ and $\max(S(i))$.
\end{itemize}
Note that $R(i)$ and $S(i)$ are well defined because $x_{j(i)}$ and $x_{j(i+1)}$ occur between $z_0$ and $z_{r+1}$ in $(z_0,x_{p+1})+T_2'[X \cup Z] + (x_0,z_{r+1})$ and $y_{k(i)}$ and $y_{k(i+1)}$ occur between $z_{r+1}$ and $z_0$ in $(z_{r+1},y_{q+1})+T_3'[Y \cup Z] + (y_0,z_{0})$.
We note the following properties.
\begin{enumerate}
\item[(i)] $|R(i)| \geq 2$ and $|S(i)| \geq 2$ for each $i \in \{0, \ldots, p+q\}$.

This is because $\min(R(i))<\max(R(i))$ and $\min(S(i))<\max(S(i))$ by the definitions of $R(i)$ and $S(i)$.

\item[(ii)] $j(0)=k(0)=0$, so $\max(R(0)) = r+1$ and $\min(S(0))=0$.

We have $j(0)=k(0)=0$ by definition.  Thus $z_{r+1}$ is the first element of $Z$ after $x_{j(0)}$ in $(z_0,x_{p+1})+T_2'[X \cup Z] + (x_0,z_{r+1})$ and $z_0$ is the first element of $Z$ after $y_{k(0)}$ in $(z_{r+1},y_{q+1})+T_3'[Y \cup Z] + (y_0,z_{0})$.

\item[(iii)] $j(p+q)=p$ and $k(p+q)=q$, so $\min(R(p+q))=0$ and $\max(S(p+q))=r+1$.

The first $p+q$ elements of $T_1'[X\cup Y]$ comprise all of $X$ and $Y$, so $j(p+q)=p$ and $k(p+q)=q$ follow.  Thus $\min(R(p+q))$ is the index of the last element of $Z$ before $x_{p+1}$ in $(z_0,x_{p+1})+T_2'[X \cup Z] + (x_0,z_{r+1})$ and $\max(S(p+q))$ is the index of the last element of $Z$ before $y_{q+1}$ in $(z_{r+1},y_{q+1})+T_3'[Y \cup Z] + (y_0,z_{0})$.

\item[(iv)] If the $(i+1)$st entry of $T_1'[X \cup Y]$ is in $X$, then $j(i+1)=j(i)+1$, $k(i+1)=k(i)$, $\max(R(i+1))=\min(R(i))+1$ and $S(i+1)=S(i)$.

Since the $(i+1)$st entry
of $T_1'[X \cup Y]$ is in $X$, it is clear that $j(i+1)=j(i)+1$. Since the $(i+1)$st entry is not in $Y$, $k(i+1)=k(i)$, and it follows that $S(i+1)=S(i)$.

Also, $\max(R(i+1))=\min(R(i))+1$ because $x_{j(i)+1}=x_{j(i+1)}$ and, in $(z_0,x_{p+1})+T_2'[X \cup Z] + (x_0,z_{r+1})$, the indices of the last element of $Z$ before $x_{j(i+1)}$ and the first element of $Z$ after $x_{j(i+1)}$ are consecutive.

\item[(v)] If the $(i+1)$st entry
of $T_1'[X \cup Y]$ is in $Y$, then $j(i+1)=j(i)$, $k(i+1)=k(i)+1$, $R(i+1)=R(i)$, and $\min(S(i+1))=\max(S(i))-1$.

This follows similarly to (iv), with the roles of $X$ and $Y$ interchanged, and considering the block $T_3'$ in place of $T_2'$.
\end{enumerate}

Let $\ell$ be the least element of $\{0, \ldots, p+q\}$ such that $\min(R(\ell)) < \max(S(\ell))$.  (Note that such an $\ell$ exists by (iii).)
\vspace*{1ex}

\noindent
{\bf Claim.} $|R(\ell) \cap S(\ell)| \geq 2$.
\vskip 1ex

{\em Proof of claim.}
Note that $\ell \neq 0$ since $\min(R(0)) \geq 0 = \max(S(0))$ by (ii).  Thus $\ell-1 \geq 0$ and by minimality of $\ell$, $\min(R(\ell-1)) \geq \max(S(\ell-1))$.

First, suppose that the $\ell$th entry
of $T_1'[X \cup Y]$ is in $X$.  Then by (iv),
\[
\max(R(\ell)) = \min(R(\ell-1))+1 \geq \max(S(\ell-1))+1 = \max(S(\ell))+1.
\]
Thus, since $\min(R(\ell)) < \max(S(\ell))$, it follows that
\[
\min(R(\ell)) \leq \max(S(\ell))-1 < \max(S(\ell)) < \max(R(\ell)).
\]
As the elements of $R(\ell)$ are consecutive, this means that $R(\ell)$ must contain both $\max(S(\ell))-1$ and $\max(S(\ell))$.  But from (i), $|S(\ell)| \geq 2$, and the elements of $S(\ell)$ are consecutive, so $\{\max(S(\ell))-1, \max(S(\ell))\} \subseteq S(\ell)$.  Thus $|R(\ell) \cap S(\ell)| \geq 2$.

Otherwise, the $\ell$th entry
of $T_1'[X \cup Y]$ is in $Y$.  A similar argument using (v) shows that $|R(\ell) \cap S(\ell)| \geq 2$.  This concludes the proof of the claim.
\vskip 1ex

It follows that for some $m$, $\{m, m+1\} \subseteq R(\ell) \cap S(\ell)$.

\begin{itemize}
\item Let $T_1$ be the permutation of $B_1$ obtained by inserting $a$ into $T_1'$ immediately after the $\ell$th entry
(and at the beginning of the sequence if $\ell=0$).  Note that, in $(x_0,y_0) + T_1 + (x_{p+1},y_{q+1})$, $a$ will appear between $x_{j(\ell)}$ and $x_{j(\ell)+1}$ and also between $y_{k(\ell)}$ and $y_{k(\ell)+1}$.
\item Let $T_2$ be a permutation of $B_2$ obtained by inserting $a$ into $T_2'$ so that, in $(z_0,x_{p+1})+T_2+(x_0,z_{r+1})$, $a$ appears between $x_{j(\ell)+1}$ and $x_{j(\ell)}$ and also between $z_m$ and $z_{m+1}$.  A suitable position to insert $a$ exists because $\{m,m+1\} \subseteq R(\ell)$.
\item Let $T_3$ be a permutation of $B_3$ obtained by inserting $a$ into $T_3'$ so that, in $(z_{r+1},y_{q+1})+T_3'+(y_0,z_0)$, $a$ appears between $y_{k(\ell)+1}$ and $y_{k(\ell)}$ and also between $z_{m+1}$ and $z_m$.  A suitable position to insert $a$ exists because $\{m,m+1\}\subseteq S(\ell)$.
\end{itemize}

From the above, it is easy to confirm that, for each point $u \in V \setminus \{a\}$ such that $|\mathcal{B}_u \cap \mathcal{B}_a|=2$ we have that $(u,a)$ is in one of the blocks in $\mathcal{T}_u \cap \mathcal{T}_a$ and $(a,u)$ is in the other.
\end{proof}

\begin{example}
We illustrate the inductive step of the proof of Lemma~\ref{lemma:directing} by constructing a $\DPD(4;12,7)$.  The following blocks form a $\PD_2(4;12,7)$ on point set $\{0, \ldots, 11\}$.
\begin{eqnarray*}
B_1&=& \{0,1,2,3,4,5,6\} \\
B_2&=& \{0,1,3,4,7,8,9\} \\
B_3&=& \{0,2,5,6,9,10,11\} \\
B_4&=& \{1,2,7,8,9,10,11\}
\end{eqnarray*}
Removing the point $a=0$, we can order the remaining elements of each block to form the following $\DPD(4;11)$ with block sizes six and seven,
\begin{eqnarray*}
T_1'&=& (1,2,3,4,5,6) \\
T_2'&=& (4,3,7,8,9,1) \\
T_3'&=& (6,5,10,11,9,2) \\
T_4'&=& (2,1,9,11,10,8,7)
\end{eqnarray*}
We have $X=(B_1 \cap B_2) \setminus\{0\}=\{1,3,4\}$, $Y=(B_1 \cap B_3) \setminus\{0\}=\{2,5,6\}$ and $Z=(B_2 \cap B_3) \setminus\{0\}=\{9\}$, where $(x_1,x_2,x_3)=(1,3,4)$, $(y_1,y_2,y_3)=(2,5,6)$ and $(z_1)=(9)$.  It is straightforward to verify that:
\begin{itemize}
\item $j(0)=0$, $j(1)=j(2)=1$, $j(3)=2$, $j(4)=j(5)=j(6)=3$;
\item $k(0)=k(1)=0$, $k(2)=k(3)=k(4)=1$, $k(5)=2$, $k(6)=3$;
\item $R(0)=\{1,2\}$, $R(1)=R(2)=\{0,1,2\}$, $R(3)=R(4)=R(5)=R(6)=\{0,1\}$;
\item $S(0)=S(1)=\{0,1\}$, $S(2)=S(3)=S(4)= \{0,1,2\}$, $S(5)=S(6)=\{1,2\}$.
\end{itemize}
Thus $\ell=1$, and $\{0,1\} \subseteq R(1) \cap S(1)$.  We form $T_1$ by inserting $0$ after the first entry of $T_1'$.   To form $T_2$ from $T_2'$, we insert $0$ so that, in $(z_0,x_4) + T_2' + (x_0,z_2)$, it is between $x_{j(1)+1}=x_2=3$ and $x_{j(1)}=x_1=1$ and also between $z_0$ and $z_1=9$ (i.e.\ $0$ must appear before $9$ in $T_2$).  Similarly, we insert $0$ into $T_3'$ so that, in $(z_2,y_4) + T_2' + (y_0,z_0)$, it is between $y_{k(1)+1}=y_1=2$ and $y_{k(1)}=y_0$ and also between $z_1=9$ and $z_0$.
Also taking $T_4=T_4'$, we obtain a $\DPD(4;12,7)$ with the following blocks.

\begin{eqnarray*}
T_1 &=& (1,0,2,3,4,5,6) \\
T_2 &=& (4,3,7,8,0,9,1) \\
T_3 &=& (6,5,10,11,9,2,0) \\
T_4 &=& (2,1,9,11,10,8,7).
\end{eqnarray*}
\end{example}

\begin{theorem} \label{DPDN}
Let $v \geq k \geq 2$ and $n \geq 1$ be integers.
If
$nk-\binom{n}{3} \leq 2v < (n+1)k-\binom{n+1}{3}$,
then $\DPDN(v,k)=n=\PD_2(v,k)$.
\end{theorem}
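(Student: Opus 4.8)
The plan is to establish $\DPDN(v,k)\le n$ and $\DPDN(v,k)\ge n$ by assembling results already proved in Sections~\ref{Sec:Main} and~\ref{sec:DPN}; the equality $\PDN_2(v,k)=n$ will drop out along the way.

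First I would observe that the hypothesis $nk-\binom{n}{3}\le 2v<(n+1)k-\binom{n+1}{3}$ is precisely the hypothesis of Theorem~\ref{PDN} specialised to $t=2$ and $\lambda=2$, since then $(t-1)\binom{n}{\lambda+1}=\binom{n}{3}$ and $(t-1)\binom{n+1}{\lambda+1}=\binom{n+1}{3}$. Hence Theorem~\ref{PDN} gives $\PDN_2(v,k)=n$ immediately, which settles the second equality in the statement. Combining this with Lemma~\ref{directed vs undirected} in the form $\DPDN(v,k)\le\PDN_2(v,k)$ yields the upper bound $\DPDN(v,k)\le n$.

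For the matching lower bound I would construct a $\DPD(n;v,k)$. The strict inequality $nk-\binom{n}{3}<(n+1)k-\binom{n+1}{3}$ forces $k>\binom{n}{2}\ge\binom{n-1}{2}$ --- this follows exactly as in the derivation of \eqref{E:kBig}, with $t=2$ and $\lambda=2$ --- and together with the left-hand inequality $2v\ge nk-\binom{n}{3}$ of the hypothesis this shows that both hypotheses of Lemma~\ref{General Construction} are satisfied with $t=2$, $\lambda=2$. Lemma~\ref{General Construction} then produces a $\PD_2(n;v,k)$, say $(V,\mathcal{B})$, every point of which has frequency at most $\lambda+1=3$. Viewing $(V,\mathcal{B})$ as a $\PD_2(n;v)$ whose blocks happen all to have size $k$, Lemma~\ref{lemma:directing} supplies a $\DPD(n;v)$, $(V,\{T_B\}_{B\in\mathcal{B}})$, in which each $T_B$ is a permutation of $B$; since $|B|=k$ for every $B$, each $T_B$ is an ordered $k$-tuple, so this is in fact a $\DPD(n;v,k)$. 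Therefore $\DPDN(v,k)\ge n$, and combining the two bounds gives $\DPDN(v,k)=n=\PDN_2(v,k)$.

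There is no genuinely difficult step here: the theorem falls out of the machinery built up in the previous two sections. The only points warranting a line of care are the verification that the strict inequality in the hypothesis really yields $k\ge\binom{n-1}{2}$, so that Lemma~\ref{General Construction} applies, and the remark that Lemma~\ref{lemma:directing} preserves the size of each block, so that a uniform packing is directed into a uniform directed packing. All of the substantive work --- the block-ordering construction --- was already carried out in the proof of Lemma~\ref{lemma:directing}.
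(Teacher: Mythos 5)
Your proposal is correct and follows essentially the same route as the paper: the upper bound via Lemma~\ref{directed vs undirected} together with Theorem~\ref{General Upper Bound} (which you invoke through Theorem~\ref{PDN}, whose upper bound rests on the same result), and the lower bound by applying Lemma~\ref{General Construction} with $(t,\lambda)=(2,2)$ to get a $\PD_2(n;v,k)$ with all frequencies at most $3$ and then directing it with Lemma~\ref{lemma:directing}. Your explicit verification that $k>\binom{n}{2}\ge\binom{n-1}{2}$ is a welcome bit of extra care that the paper leaves implicit.
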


\begin{proof}
The upper bound follows from Lemmas~\ref{directed vs undirected} and~\ref{General Upper Bound}. By Lemma~\ref{General Construction}, there exists a $\PD_2(n;v,k)$, $(V,\mathcal{B})$, such that $|\mathcal{B}_x| \leq 3$ for each $x \in V$; existence of a $\DPD(n;v,k)$ now follows from Lemma~\ref{lemma:directing}.
\end{proof}

\section{Summary and Discussion} \label{Sec:Conclusion}

For given $t$ and $\lambda$, Theorem~\ref{PDN} completely characterizes the packing number $\PDN_{\lambda}(v,k,t)$ when the block size, $k$, is large with respect to $v$. Similarly, for given $\lambda$, Theorem~\ref{DPDN} characterizes the directed packing number $\DPDN(v,k)$ when $k$ is large with respect to $v$. 
Specifically, Theorem~\ref{PDN} shows that $\PDN_{\lambda}(v,k,t)=n$ when $n$ is an integer satisfying
\begin{equation}\label{condition}
nk-(t-1)\binom{n}{\lambda+1} \leq \lambda v < (n+1)k-(t-1)\binom{n+1}{\lambda+1}.
\end{equation}
This theorem applies when, for given integers $v$, $k$ and $\lambda$,
\[
k > \frac{\lambda v}{n+1} + \frac{t-1}{\lambda+1}\binom{n}{\lambda}
\]
for some integer $n$.  Theorem~\ref{DPDN} shows that $\DPDN(v,k)=\PDN_2(v,k)$ when $v$ and $k$ satisfy the same constraints for $(t,\lambda)=(2,2)$.

The upper bound for Theorem~\ref{PDN} is obtained from Theorem~\ref{General Upper Bound}, so for any parameter sets to which it applies, it establishes the bound of Theorem~\ref{General Upper Bound} is tight. In particular, when $\lambda=1$, Theorem~\ref{General Upper Bound} specializes to the stronger form of the second Johnson bound and hence shows that this bound is tight. Of course, unlike the second Johnson bound, the bound in Theorem~\ref{General Upper Bound} applies for arbitrary $\lambda$.

For arbitrary $\lambda$, it is natural to ask how close the previously known general upper bounds on $\PDN_{\lambda}(v,k,t)$ are to the actual value of the packing number. We can now answer this question with respect to the range of interest. The Johnson-Sch\"{o}nheim bound $U_{\lambda}(v,k,t)$ is known to perform relatively poorly when $k$ is large with respect to $v$. For instance, when $(t,\lambda)=(2,1)$, it is noted in~\cite{Stinson} that $U_1(v,k,2)$ is a good bound when $v \geq k^2-k+1$ but the second Johnson bound is better otherwise. 

Our results bear out this observation. As in our discussion after Theorem~\ref{General Upper Bound}, we let $c>0$ and $0 \leq \alpha \leq 1$ be constants, and consider a regime in which $\lambda$ is fixed and $k \sim cv^\alpha$ as $v$ becomes large. Again, we assume $k \geq f_{t,\lambda}(v)$ so that Theorem~\ref{PDN} applies.

\begin{itemize}

    \item
If $\frac{\lambda}{\lambda+1} \leq \alpha < 1$, then we have
\[U_{\lambda}(v,k,t) \sim \frac{\lambda v^t}{k^t} \sim \frac{\lambda v^{t(1-\alpha)}}{c^t}.\]
However, in these cases we have seen that $\PDN_{\lambda}(v,k,t)\sim dv^{1-\alpha}$ for a constant $d$ depending on $\lambda$ and $c$ (in fact $d=\frac{\lambda}{c}$ if $\frac{\lambda}{\lambda+1} < \alpha < 1$). So the order of $U_{\lambda}(v,k,t)$ is the $t$th power of the order of the true packing number.
    \item
If $\alpha = 1$, then the asymptotic behaviour of $U_{\lambda}(v,k,t)$ is complicated by the floors. However, we certainly have that
\begin{multline}U_{\lambda}(v,k,t) \geq \frac{v}{k}\left(\frac{v}{k}\left(\cdots\left(\frac{v}{k}\left(\frac{\lambda v}{k}-1\right)-1\right)\cdots\right)-1\right)-1 \\
\sim \frac{\lambda}{c^t}-\frac{1}{c^{t-1}}-\frac{1}{c^{t-2}}-\cdots-\frac{1}{c}-1.
\end{multline}
In this case we have seen $\PDN_{\lambda}(v,k,t)\sim \lfloor \frac{\lambda}{c} \rfloor$. So $U_{\lambda}(v,k,t)$ will be far from a tight bound when $c$ is small.
\end{itemize}

We note that the behaviour above contrasts sharply with the case where $k$ is fixed as $v$ grows. For instance, it is known that if $k \in \{3,4\}$ and $v \geq 20$, $U_1(v,k,t)-\PDN(v,k) \leq 1$, see~\cite{MillsMullin}. In fact, Theorem~\ref{PDN} explains two of the small cases in which $\PDN(v,4) < U_1(v,4,2)=B(v,4)$,
namely that $\PDN(8,4)=2$ and $\PDN(9,4)=3$.

Recall that a $\PD(v,k,t)$ is equivalent to a binary code with length $v$, minimum distance $2(k-t+1)$ and constant weight $k$. The maximum size of a binary code of length $n$, minimum distance $d$ and constant weight $w$ has been well studied in the coding theory literature and is often denoted $A(n,d,w)$ (see, for example,~\cite{AgrellVardyZeger}). Thus, our results determining $\PDN_\lambda(v,k,t)$, when specialized to the case $\lambda=1$, equivalently determine $A(v,2(k-t+1),k)$ for large $k$.

As we discussed, a $\DPD(v,k,t)$, yields a $(k-t)$-insertion/deletion code of length $k$ over an alphabet of size $v$. If the definition of such a code forbids codewords containing symbols more than once, then the two objects are equivalent. So, in particular, the values of $\DPDN(v,k)$ given by Theorem~\ref{DPDN} are also the maximum sizes of $(k-2)$-insertion/deletion codes of length $k$ over an alphabet of size $v$ that forbid repeated symbols. In fact, this also determines the maximum sizes of the corresponding codes allowing repeated symbols, because it is not too difficult to see that a maximum-sized code allowing repeats can be obtained by adding the $v$ `constant' codewords to a maximum-sized code forbidding repeats. Note that $d=k-2$ is the largest choice of $d$ for which $d$-insertion/deletion codes of length $k$ are interesting objects.

Our results show that when $k$ is large with respect to $v$, $\DPDN(v,k) = \PDN_2(v,k)$.  This assertion stems from Lemma~\ref{lemma:directing}, which proves that if no element has frequency greater than three, then the blocks of a $\PD_2(v,k)$ can be directed to produce a $\DPD(v,k)$.  Directing algorithms have previously been studied for balanced incomplete block designs.  Although the blocks of every BIBD$(v,3,2\lambda)$ can be ordered to form a DBIBD$(v,3,\lambda)$ \cite{ColbournColbourn,HarmsColbourn,HarmsColbourn2},
it is known that not every $2$-fold design can be directed to form a directed design. The smallest known example of a BIBD$(v,k,2\lambda)$ whose blocks cannot be ordered to form a DBIBD$(v,k,\lambda)$ is a BIBD$(28,7,2)$, see~\cite{BennettMahmoodi}.  Additionally, in~\cite{Mahmoodi}, it was shown that none of the (finitely many) symmetric BIBD$(v,k,2)$ with $v \geq 37$ known at that time are directable.
In particular, there are exactly four non-isomorphic symmetric BIBD$(37,9,2)$~\cite{Assmus}, none of which are directable~\cite{Mahmoodi}.  Hence $\DPDN(37,9) \neq \PDN_2(37,9)$.  
Nevertheless, the equality $\DPDN(v,k)=\PDN_2(v,k)$ holds for the small values of $k$ which have been investigated in the literature, namely all cases with $k \in \{3,4,5\}$ for which both $\DPDN(v,k)$ and $\PD_2(v,k)$ are known; see~\cite{BennettMahmoodi}.  
This observation leads to the following question: 
\begin{question} \label{Conj:DPDN}
For which integers $v \geq k \geq 3$ is it true that $\DPDN(v,k) = \PDN_2(v,k)$?
\end{question}
\noindent
By the above discussion, the answer is negative for $(v,k)=(37,9)$, but this seems to be a rare occurrence.  We thus conjecture that there is only a small number of pairs $(v,k)$ which do not satisfy the equation in Question~\ref{Conj:DPDN}. 

\section{Acknowledgements}

Burgess and Danziger acknowledge support from NSERC Discovery grants RGPIN-2025-04633 and RGPIN-2022-03816, respectively.  Horsley was supported by Australian Research Council grants DP220102212 and DP240101048.

\end{document}